\documentclass[twocolumn]{autart} 
\usepackage{amsmath,amsfonts,amssymb}
 \usepackage[utf8]{inputenc}     
\usepackage{nicematrix}
\usepackage{tikz,mathrsfs}
\usepackage{commath}
\usepackage{comment}
\usetikzlibrary{fit}
\usepackage{algorithm}
\usepackage{algorithmicx,algcompatible}
\usepackage[compatible]{algpseudocode}
\usepackage{float,subfigure}

\DeclareMathOperator{\rank}{rank}
\DeclareMathOperator{\blkdiag}{blk-diag}

\DeclareMathOperator{\oC}{\overline{\mathbb{C}^+}}
\DeclareMathOperator{\R}{\mathbb{R}}

\newcommand{\C}{{\mathbb{C}}}

\makeatletter
\newenvironment{breakablealgorithm}
{
		\begin{center}
			\refstepcounter{algorithm}
			\hrule height.8pt depth0pt \kern2pt
			\renewcommand{\caption}[2][\relax]{
{\raggedright\textbf{\fname@algorithm~\thealgorithm} ##2\par}%
				\ifx\relax##1\relax 
				\addcontentsline{loa}{algorithm}{\protect\numberline{\thealgorithm}##2}%
				\else 
				\addcontentsline{loa}{algorithm}{\protect\numberline{\thealgorithm}##1}%
				\fi
				\kern2pt\hrule\kern2pt
			}
		}{
		\kern2pt\hrule\relax
	\end{center}
}
\makeatother

\begin{document}
	
	\begin{frontmatter}
		
		\title{Distributed partial state estimation for linear state-space systems\thanksref{footnoteinfo}} 
		
		\thanks[footnoteinfo]{This paper was not presented at any IFAC meeting. Corresponding author: Nutan Kumar Tomar.	}
		
		\author[inst1]{Juhi Jaiswal}\ead{juhi$\_$jaiswal@iiitvadodara.ac.in},	
		\author[inst2]{Thomas Berger}\ead{thomas.berger@mathematik.uni-halle.de},    
		\author[inst3]{Nutan Kumar Tomar}\ead{nktomar@iitp.ac.in}              
		
		\address[inst1]{Indian Institute of Information Technology Vadodara, Gandhinagar - 382028, Gujarat, India} 	
		
\address[inst2]{Institut f\"ur Mathematik, Martin-Luther-Universit\"at Halle-Wittenberg, Theodor-Lieser-Stra\ss e 5, 06120 Halle, Germany}

		\address[inst3]{Indian Institute of Technology Patna, Patna - 801103, Bihar, India}

		\begin{keyword}   
Distributed systems, Linear systems, Estimation, Partial state estimation, Partial detectability
		\end{keyword}                           	
		
		\begin{abstract}          
This study is concerned with the problem of partial state estimation for linear time-invariant (LTI) distributed state-space systems. A necessary and sufficient condition is established in terms of a simple rank criterion involving the system coefficient matrices, provided the communication graph is either directed, balanced and strongly connected or undirected and connected. The estimator parameter matrices are obtained by simple matrix theory. Finally, a numerical example demonstrates the feasibility and effectiveness of the proposed theoretical results and design algorithm.
		\end{abstract}
	\end{frontmatter}

\section{Introduction} 
In recent years, the rapid advancement in sensor technology,  affordable computational resources, and multi-agent cooperation technology, have paved the way for employing large-scale and spatially deployed, intricate control systems in numerous technical and societal domains. A large portion of modern control theory bases its designs on the assumption that the state vector of the system to be controlled is available for measurement. In many real-world scenarios, only a few output quantities are available. The application of theories that assume availability of state measurements is severely limited in these cases \cite{luenberger1964observing}. An early contribution to construct an estimate of the system state  via the available system inputs and
outputs was made by Luenberger \cite{luenberger1964observing}. The device which reconstructs the state vector is called an estimator (observer). However, in large-scale systems, no sufficient number of measurements is available at a single location to produce an asymptotic estimate of the full plant state. Thus, the problem of distributed state estimation becomes an emerging research topic in modern control theory. The challenge is to design a distributed estimator, i.e., a bunch of local estimators connected via a network topology. Each local estimator individually estimates the state vector via its own limited measurements (input and output) and the communication received from neighboring local estimators. A network of estimators is said to achieve omniscience asymptotically, if all of their states converge to the plant state \cite{zhu2014cooperative}. 

Over the past two decades, a significant development has been achieved for the design of distributed full-state estimators for linear time-invariant (LTI) distributed state-space systems. For instance, Park and Martins~\cite{park2012augmented} provided a sufficient condition for the design of a network of augmented observers for the distributed estimation problem in terms of the eigenvalues of the Laplacian matrix of the underlying communication graph and the spectral radius of the dynamic matrix of the LTI system. Further,~\cite{park2012necessary} generalized the results of~\cite{park2012augmented} and established necessary and sufficient conditions for the existence of omniscience-achieving design parameters for the augmented distributed observer in terms of the weight matrix, the system matrix, and observation matrix. Ugrinovskii~\cite{ugrinovskii2013conditions} presented necessary and sufficient conditions for the detectability of a linear plant via a network of state estimators. Further, Zhu et al.~\cite{zhu2014cooperative}  established a necessary and sufficient condition for the existence of observer parameters that ensure that all observers over an undirected connected network achieve omniscience asymptotically under the assumption of joint detectability. Also,~\cite{doostmohammadian2013genericity} explored agent communication's role in error stability in distributed estimation, demonstrating its ability to track unstable systems. Mitra and Sundaram~\cite{mitra2016approach} provided necessary and sufficient conditions for a strongly connected graph so that the proposed observer can achieve omniscience. Kim et al.~\cite{kim2016distributed} studied distributed Luenberger observer design for an undirected and connected communication graph under joint detectability of the state-space systems. Moreover, Park and Martins~\cite{park2017design} developed necessary and sufficient conditions in terms of the detectability of the subsystems of the plant that are associated with the source components of the graph for the existence of a parameter choice for a distributed observer that achieves omniscience asymptotically and satisfies the scalability condition. Han et al.~\cite{han2018simple} presented a simple algorithm to design a distributed observer. Yang et al.~\cite{yang2022state} proposed a distributed approach that estimates the system's state vector by each observer, if the union of communication links in bounded intervals makes the network communication graph connected. Cao and Wang~\cite{cao2023distributed} proposed a novel distributed full-state observer design approach for linear systems with unknown inputs under the assumption of a strongly connected graph and a version of detectability. Liang et al.~\cite{liang2024distributed}  presented a novel set of distributed unknown input observers in the context of the distributed state and fault estimation of cyber-physical systems under DoS attacks. Zhou and Zhao~\cite{zhou2025adaptive} presented an adaptive distributed unknown input observer under a directed topology based on the global output estimation method. 

It is noteworthy here that all the above mentioned works focus on distributed full-state estimation. 
However, in many real-world applications such as feedback control, fault diagnosis, process monitoring, etc., only the information of some part or linear combination of the state vector (components) is required. The problem of estimating this particular part without estimating the full-state vector of a networked or spatially deployed system is known as distributed partial state estimation. Remarkably, partial state estimators can be designed under much less restrictive conditions and computational loads than those required for full-state estimators. As a result,  research on partial state estimator design is still ongoing, even for centralized state-space systems~\cite{darouach2025functional}.  To the best of our knowledge, the problem of distributed partial state estimation has not been addressed  in the literature yet.

Motivated by these facts, the present study aims to solve the problem of distributed partial state estimation for LTI distributed state-space systems of the form 
\begin{subequations}\label{eqn1}
	\begin{align}
\dot{x}(t) &= Ax(t) + Bu(t), \label{eqn1a} \\
y_i(t) &= C_ix(t) +D_iu(t) ,\qquad i = 1,2 \ldots,l, \label{eqn1b} \\
z(t) &= Kx(t), \label{eqn1c}
	\end{align}
\end{subequations}
where $x: \mathbb{R} \to \mathbb{R}^n$, $u: \mathbb{R} \to \mathbb{R}^m$, $y_i: \mathbb{R} \to \mathbb{R}^{p_i}$, and  $z: \mathbb{R} \to \mathbb{R}^r$ are the state vector, the input vector, the output vector, and the functional vector (to be estimated), respectively. $A \in \mathbb{R}^{n \times n}$, $B \in \mathbb{R}^{n \times m}$, $C_i \in \mathbb{R}^{p_i \times n}$, $D_i \in \mathbb{R}^{p_i \times m}$, for $i \in \{1,2,\ldots,l\}$, and $K \in \mathbb{R}^{r \times n}$ are known constant matrices.

Addressing this problem requires a systematic approach that begins with centralized partial state estimator design for LTI state-space systems of the form
\allowdisplaybreaks
\begin{subequations}\label{dls}
\begin{align}
\dot{x}(t) &= Ax(t) + Bu(t), \label{dlsa} \\
y(t) &= Cx(t) + Du(t), \label{dlsb} \\
z(t) &= Kx(t), \label{dlsc}
\end{align}
\end{subequations} 
before
moving to distributed partial state estimation. In~\eqref{dlsb}, we additionally have $y: \mathbb{R} \to \mathbb{R}^{p}$, $C \in \mathbb{R}^{p \times n}$, and $D \in \mathbb{R}^{p\times m}$. This paper presents the following key contributions to advance the state estimation theory for distributed systems:
\begin{enumerate}
\item A novel partial state estimator design is proposed for partially detectable (centralized) state-space system \eqref{dls}. This approach provides a foundational ground for the distributed partial state estimator design. 

\item A necessary and sufficient condition for the existence of distributed partial state estimators for system~\eqref{eqn1} is established. Notably, this condition  is exactly the joint partial detectability of~\eqref{eqn1}. Hence, no additional assumptions are required on the system and/or the communication graph.

\item A numerical algorithm is proposed for the design of distributed partial state estimators. 
\end{enumerate}

This paper is organized as follows. In Section \ref{sec:pre}, we present some preliminaries required for the development of the main results.
Section \ref{sec:classical} provides a novel method for the design of partial state estimators for systems~\eqref{dls}. Necessary and sufficient conditions  for the existence of distributed partial state estimators for systems~\eqref{eqn1} are given in Section~\ref{sec:distributed}. An illustrative example is presented in Section~\ref{sec:numerical} to validate the theoretical findings. Finally, some conclusions are given in Section~\ref{sec:conclusion}.

Throughout the article, we use the symbol $\otimes$ to represent the Kronecker product. $0$ and $I$ denote zero  and identity matrices of appropriate dimensions, resp. The set of complex numbers is denoted by $\mathbb{C}$, $\mathbb{C}^- := \{\lambda \in \mathbb{C}~| ~ \text{Re}(\lambda)< 0\}$, and $\oC := \{\lambda \in \mathbb{C}~| ~ \text{Re}(\lambda) \geq 0\} $. The notation ``$z(t) \to 0$ as $t \to \infty$" means ``$\lim_{t \to \infty} {\rm ess{-}sup}_{s\ge t} \norm{z(s)} = 0$". In a block matrix $\begin{bmatrix}
    A_1 & A_2 \\ * & A_3
\end{bmatrix}$, an asterisk ``$*$" represents $A_2^\top$.
For matrices $A$ and $C$ of appropriate dimensions, 
$\mathcal{D}_{ [A,C],n,\lambda} := \begin{bmatrix}
C \\ C(\lambda I_n-A) \\ \vdots \\ C(\lambda I_n-A)^{n-1} \\ (\lambda I_n-A)^{n} \end{bmatrix}.$

\section{Preliminaries}\label{sec:pre}
In this section, we describe the notations used in this paper and recall some basic results from systems theory, graph theory, and linear algebra, used in the development of main results of this paper.

 \begin{prop}[{\cite[Thm $2.14$]{hardy2019matrix}}]\label{prop:eigen:2}
 Let $A\in\C^{m \times m}$ with eigenvalue $\lambda\in \sigma(A) := \{ \lambda \in \C \mid \det(\lambda I -A)=0 \}$ and corresponding eigenvector $u\in\C^m$. Let $B\in\C^{n \times n}$ with eigenvalue $\mu\in\sigma(B)$ and corresponding eigenvector $v\in\C^n$. Then $\lambda \mu\in \sigma(A \otimes B)$  with corresponding eigenvector $u \otimes v$. 
 \end{prop}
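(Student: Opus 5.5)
The claim follows from the mixed-product property of the Kronecker product, $(P\otimes Q)(R\otimes S) = (PR)\otimes(QS)$, valid whenever the products $PR$ and $QS$ are defined. Apply it with $P=A$, $Q=B$, $R=u$ (an $m\times 1$ matrix) and $S=v$ (an $n\times 1$ matrix). This gives
\begin{equation*}
(A\otimes B)(u\otimes v) = (Au)\otimes(Bv) = (\lambda u)\otimes(\mu v) = \lambda\mu\,(u\otimes v),
\end{equation*}
where the last equality uses bilinearity of $\otimes$, so that scalars factor out of either argument.

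It remains to check that $u\otimes v$ is a genuine eigenvector, i.e.\ that it is nonzero. Since $u\neq 0$ and $v\neq 0$, there are indices $i,j$ with $u_i\neq 0$ and $v_j\neq 0$. The entry of $u\otimes v$ in position $(i-1)n+j$ equals $u_iv_j\neq 0$, so $u\otimes v\neq 0$.

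Hence $\lambda\mu$ is an eigenvalue of $A\otimes B$ with eigenvector $u\otimes v$. Equivalently, $\det(\lambda\mu I_{mn}-A\otimes B)=0$, so $\lambda\mu\in\sigma(A\otimes B)$ in the sense defined in the statement.

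The only ingredient that might need its own justification is the mixed-product property. If it is not taken as known, verify it blockwise: the $(i,j)$ block of $(P\otimes Q)(R\otimes S)$ is
\begin{equation*}
\sum_k p_{ik}\,r_{kj}\,QS = (PR)_{ij}\,QS,
\end{equation*}
which is exactly the $(i,j)$ block of $(PR)\otimes(QS)$. No real obstacle is expected; the argument is a routine verification.
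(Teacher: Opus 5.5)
Your proof is correct. The paper does not prove this proposition; it only cites \cite[Thm 2.14]{hardy2019matrix}, and your argument is the standard one: the mixed-product computation $(A\otimes B)(u\otimes v)=(Au)\otimes(Bv)=\lambda\mu\,(u\otimes v)$, plus your check that $u\otimes v\neq 0$, which is what makes it a genuine eigenvector.
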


Now, we recall Ger{\v{s}}gorin's result, which is used to bound the spectrum of a square matrix.
\begin{prop}[{\cite[Thm. $1.1$]{varga2011gervsgorin}}]\label{prop:gervsgorin:thm}
For any $A = [a_{ij}] \in \mathbb{C}^{n \times n}$ and $\lambda \in \sigma(A)$, there is an index $k\in\{1,\ldots,n\}$ such that
$\abs{ \lambda - a_{kk} }\leq \sum_{i = 1,\, i \neq k}^{n} \abs{a_{ki}} =: r_k(A)$. 
Consequently, $\lambda \in \Gamma_k(A) := \{z \in \C \mid \abs{z - a_{kk}} \leq r_k(A)\}$, and hence, $\lambda \in \Gamma(A) := \bigcup_{k = 1}^{n} \Gamma_k(A)$. Therefore,  $\sigma(A) \subseteq \Gamma(A)$. 
\end{prop}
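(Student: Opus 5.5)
The plan is the classical row-sum argument. Let $\lambda\in\sigma(A)$ and choose an eigenvector $x=(x_1,\ldots,x_n)^\top\in\C^n\setminus\{0\}$ with $Ax=\lambda x$. Then pick the index $k$ where $\abs{x_k}=\max_{j}\abs{x_j}$. Since $x\neq 0$, this maximum is strictly positive, and the division below is legitimate.

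Next, read off the $k$-th component of the eigen-equation. It gives $\sum_{i=1}^n a_{ki}x_i=\lambda x_k$, which rearranges to
\[
(\lambda-a_{kk})x_k=\sum_{i=1,\,i\neq k}^n a_{ki}x_i .
\]
Take absolute values, apply the triangle inequality, and use $\abs{x_i}\le\abs{x_k}$ for every $i$. This yields
\[
\abs{\lambda-a_{kk}}\,\abs{x_k}\le \sum_{i\neq k}\abs{a_{ki}}\,\abs{x_i}\le r_k(A)\,\abs{x_k}.
\]
Dividing by $\abs{x_k}>0$ gives $\abs{\lambda-a_{kk}}\le r_k(A)$, so $\lambda\in\Gamma_k(A)$.

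The remaining claims are then immediate. From $\Gamma_k(A)\subseteq\Gamma(A)$ we get $\lambda\in\Gamma(A)$. Since $\lambda\in\sigma(A)$ was arbitrary, $\sigma(A)\subseteq\Gamma(A)$.

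The only step that needs care is choosing $k$ as the index of the maximal-modulus component. This choice is what allows $\abs{x_i}/\abs{x_k}\le 1$, and it guarantees the divisor is nonzero. Everything else is routine.
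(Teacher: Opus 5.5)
Your proof is correct: it is the classical argument (take an eigenvector, pick the index of its maximal-modulus component, apply the triangle inequality to the $k$-th row), which is the proof given in the cited source \cite[Thm.~1.1]{varga2011gervsgorin}. The paper itself states this proposition with only a citation and gives no proof, so there is no alternative route to compare against.
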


The following proposition is a direct consequence of the real Schur decomposition~\cite{piziak2007matrix} and the application of permutation matrices.
\begin{prop}\label{lm:lower:triangular}
For any matrix $A \in \mathbb{R}^{n \times n}$, there exists an orthogonal matrix $P \in \mathbb{R}^{n \times n}$ such that $P^\top AP = \begin{bNiceMatrix}
R_{11} & 0 &  \Ldots & 0 \\ 
\Vdots & \Ddots & \Ddots & \Vdots \\
& & & 0 \\ R_{m1} & \Ldots & & R_{mm}
\end{bNiceMatrix}, $
where each $R_{ii}$ is either a $1 \times 1$ matrix having real eigenvalues or a $2 \times 2$ matrix having complex conjugate eigenvalues. 
\end{prop}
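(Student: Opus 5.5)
The plan is to start from the real Schur decomposition and then reverse the order of the basis vectors, which turns an upper block-triangular form into a lower one. By the real Schur theorem \cite{piziak2007matrix}, there is an orthogonal $Q \in \mathbb{R}^{n\times n}$ with
\[
Q^\top A Q = \begin{bmatrix} T_{11} & T_{12} & \cdots & T_{1m} \\ 0 & T_{22} & \cdots & T_{2m} \\ \vdots & & \ddots & \vdots \\ 0 & \cdots & 0 & T_{mm}\end{bmatrix},
\]
which is upper quasi-triangular. Each diagonal block $T_{jj}$ is one of two kinds: a real $1\times 1$ block carrying a real eigenvalue, or a $2\times 2$ real block whose eigenvalues are a complex conjugate pair.

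Next I take the $n\times n$ reversal (anti-identity) permutation matrix $J$, with ones on the anti-diagonal. It is symmetric, orthogonal and satisfies $J^2=I$. Conjugating by $J$ sends entry $(i,k)$ to entry $(n+1-i,\,n+1-k)$, so $J^\top T J$ is lower quasi-triangular. The block order is reversed, so the new $j$-th diagonal block is $R_{jj} = J_s T_{m+1-j,m+1-j} J_s$, where $J_s$ is the reversal matrix of size $s\in\{1,2\}$.

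The only step needing care is that the diagonal blocks keep their spectral type. For a $1\times1$ block, $J_s=1$ and nothing changes. For a $2\times2$ block, $J_s T J_s$ is orthogonally similar to $T$, hence has the same eigenvalues, and it stays real. The complex conjugate pair is therefore preserved. The block below the diagonal, $R_{jk}$ with $j>k$, is the reversed image of $T_{m+1-j,m+1-k}$, an above-diagonal block of the Schur form. The blocks above the diagonal come from the zero blocks and so vanish.

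Finally, I set $P := QJ$. It is a product of orthogonal matrices, so it is orthogonal and real, and $P^\top A P = J^\top (Q^\top A Q) J$ has exactly the required lower block-triangular form. There is no real obstacle beyond the index bookkeeping for the block reversal.
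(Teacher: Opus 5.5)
Your proof is correct and follows the same route the paper indicates. The paper gives no formal proof and simply says the result follows from the real Schur decomposition together with a permutation matrix; your choice $P = QJ$ with the reversal permutation $J$ makes this precise, and you also check that the $2\times 2$ diagonal blocks keep their complex-conjugate spectra.
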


\begin{prop}[{\cite{piziak2007matrix}}]\label{pre:prop5}
Let $X \in \mathbb{R}^{m_1 \times r_1}$ and $Y \in \mathbb{R}^{r_1 \times n_1}$. If $\rank X = r_1$, then $\rank(XY) = \rank Y$.
\end{prop}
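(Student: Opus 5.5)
The final statement, Proposition~\ref{pre:prop5}, says that if $X \in \mathbb{R}^{m_1 \times r_1}$ has full column rank $r_1$, then $\rank(XY) = \rank Y$. I will prove it by showing that $Y$ and $XY$ have the same null space and then applying rank--nullity. Both matrices have $n_1$ columns, so equal kernels force equal ranks.

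First, since $\rank X = r_1$ equals the number of columns of $X$, rank--nullity applied to $X$ gives $\ker X = \{0\}$. That is, $Xw = 0$ implies $w = 0$ for every $w \in \mathbb{R}^{r_1}$.

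Second, I compare the null spaces.
\begin{itemize}
\item If $Yv = 0$, then $XYv = X(Yv) = 0$. Hence $\ker Y \subseteq \ker(XY)$.
\item If $XYv = 0$, apply the first step with $w = Yv$ to get $Yv = 0$. Hence $\ker(XY) \subseteq \ker Y$.
\end{itemize}
Together these give $\ker(XY) = \ker Y$ as subspaces of $\mathbb{R}^{n_1}$.

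Third, rank--nullity for maps on $\mathbb{R}^{n_1}$ gives
$\rank(XY) = n_1 - \dim\ker(XY) = n_1 - \dim\ker Y = \rank Y$.

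There is no real obstacle here. The one point to watch is that the hypothesis is full \emph{column} rank, which is exactly injectivity of $X$. Full row rank would not give the conclusion.

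An alternative route is to observe that $X^\top X$ is invertible, since its kernel is $\ker X = \{0\}$. Then $Y = (X^\top X)^{-1}X^\top (XY)$, so $\rank Y \le \rank(XY)$. Combined with the trivial bound $\rank(XY) \le \rank Y$, this gives equality.
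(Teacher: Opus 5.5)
Your proof is correct: full column rank makes $X$ injective, so $\ker(XY)=\ker Y$ in $\mathbb{R}^{n_1}$, and rank--nullity gives $\rank(XY)=\rank Y$. The left-inverse variant via $(X^\top X)^{-1}X^\top$ is also valid. The paper does not prove this proposition; it only cites \cite{piziak2007matrix}, and your kernel argument is the standard textbook proof.

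One point favours your main argument over the alternative. The paper actually applies this proposition to complex matrices, in Step~1 of the proof of Theorem~\ref{thm:Kdetectability} with $\lambda\in\mathbb{C}$. The kernel argument works verbatim over any field. The alternative route would need $\overline{X}^{\top}X$ there instead of $X^\top X$: for example, $X=\begin{bmatrix}1 & \sqrt{-1}\end{bmatrix}^{\top}$ has full column rank, but $X^{\top}X=0$.
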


\begin{prop}[{\cite{matsaglia1974equalities}}]\label{pre:prop1}
Let $X \in \mathbb{R}^{m_1 \times r_1},~S \in \mathbb{R}^{m_2 \times r_1}$, and $Y \in \mathbb{R}^{m_2 \times r_2}$. If $\rank X = r_1$ or $\rank Y = m_2$, then
$\rank \begin{bmatrix}
X & 0 \\ S & Y
\end{bmatrix} = \rank{X} + \rank{Y}.$
\end{prop}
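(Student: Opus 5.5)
The statement to prove is the last one, Proposition~\ref{pre:prop1}: if $\rank X = r_1$ (full column rank) or $\rank Y = m_2$ (full row rank), then the block lower-triangular matrix $M := \begin{bmatrix} X & 0 \\ S & Y\end{bmatrix}$ satisfies $\rank M = \rank X + \rank Y$. The approach is to remove the coupling block $S$ by multiplying with invertible block-triangular matrices, which leaves the rank unchanged. After that, $M$ becomes block diagonal, and its rank is the sum of the ranks of the diagonal blocks.

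\emph{Case 1: $\rank X = r_1$.} Since $X$ has full column rank, it has a left inverse $X^{+}\in\mathbb{R}^{r_1\times m_1}$ with $X^{+}X = I_{r_1}$; for instance, $X^{+} = (X^\top X)^{-1}X^\top$, where $X^\top X$ is invertible precisely because the columns of $X$ are independent. Multiply $M$ on the left by the invertible matrix $\begin{bmatrix} I_{m_1} & 0 \\ -SX^{+} & I_{m_2}\end{bmatrix}$. The lower-left block becomes $S - SX^{+}X = 0$, the lower-right block stays $Y$, and the top row is unchanged. The result is $\begin{bmatrix} X & 0 \\ 0 & Y\end{bmatrix}$.

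\emph{Case 2: $\rank Y = m_2$.} Since $Y$ has full row rank, it has a right inverse $Y^{-}=Y^\top(YY^\top)^{-1}$ with $YY^{-} = I_{m_2}$. Multiply $M$ on the right by the invertible matrix $\begin{bmatrix} I_{r_1} & 0 \\ -Y^{-}S & I_{r_2}\end{bmatrix}$. The lower-left block becomes $S - YY^{-}S = 0$, and the upper blocks are unaffected because the upper-right block of $M$ is zero. Again we obtain $\begin{bmatrix} X & 0 \\ 0 & Y\end{bmatrix}$.

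In both cases, the rank is invariant under multiplication by invertible matrices; this can be seen, for example, by applying Proposition~\ref{pre:prop5} to the full-rank factor. Hence $\rank M = \rank\begin{bmatrix} X & 0\\ 0 & Y\end{bmatrix}$. The column space of the block-diagonal matrix is the direct sum of $\operatorname{col}(X)\times\{0\}$ and $\{0\}\times\operatorname{col}(Y)$, so its rank equals $\rank X + \rank Y$.

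There is no real obstacle here. The only point needing care is Case 2: there the row operation from Case 1 is unavailable, because $X$ need not have a left inverse. One must therefore use a column operation and check that it does not disturb the zero upper-right block, which holds because that block is zero to begin with.
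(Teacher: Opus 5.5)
Your proof is correct. Both block eliminations are computed correctly; in Case 2 the column operation leaves $X$ untouched exactly because the upper-right block is zero. Rank invariance under invertible factors follows from Proposition~\ref{pre:prop5}, applied to transposes for the right factor in Case 2.

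The paper gives no proof of its own and only cites \cite{matsaglia1974equalities}. There the statement is a special case of the identity $\rank\begin{bmatrix} X & 0\\ S & Y\end{bmatrix} = \rank X + \rank Y + \rank\bigl((I-YY^{-})S(I-X^{-}X)\bigr)$, which holds for arbitrary generalized inverses $X^{-}$, $Y^{-}$. The last term vanishes because $X^{-}X=I$ whenever $X$ has full column rank, and $YY^{-}=I$ whenever $Y$ has full row rank. Your elimination with one-sided inverses proves this special case directly and is self-contained. The cited identity additionally measures the rank excess when neither hypothesis holds.

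One point worth adding concerns Step~2 of the proof of Theorem~\ref{thm:Kdetectability}. There the paper applies the proposition to complex matrices, since $\lambda\in\oC$. Over $\C$ your explicit formulas $(X^\top X)^{-1}X^\top$ and $Y^\top(YY^\top)^{-1}$ can break down: for $X=\begin{bmatrix}1 & \mathrm{i}\end{bmatrix}^\top$ one has $X^\top X=0$. Using conjugate transposes instead, or simply the existence of one-sided inverses over any field, makes your proof cover that use as well.
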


We conclude this section by introducing the notion of a communication graph.
Communication among the individual subsystems of~\eqref{eqn1} is described by an unweighted graph $\mathcal{G} = (\mathbf{N}, \mathbf{E}, \mathcal{A})$, where $\mathbf{N} = \{1,2,\ldots,l\}$ is the set of nodes,
$\mathbf{E} \subseteq \mathbf{N} \times \mathbf{N}$ is the set of edges, and $\mathcal{A} = [\gamma_{ij}] \in \R^{l \times l}$ denotes the adjacency matrix. Similar to \cite{yang2022state}, we say that $$\gamma_{ij} = \begin{cases}
    1,~~\text{if node $j$ sends information to node $i$,} \\
    0,~~\text{otherwise}.
\end{cases} .$$
Moreover, the Laplacian matrix associated with the communication graph $\mathcal{G}$ is denoted by $\mathcal{L} = \begin{bmatrix}
	\alpha_{ij} 
\end{bmatrix} \in \mathbb{R}^{l \times l}$, where \begin{equation*}
 \alpha_{ij} = \begin{cases}
\sum_{j = 1,\, j \neq i}^{l} \gamma_{ij},~i = j, \\
-\gamma_{ij},~\qquad i\neq j.
\end{cases}
\end{equation*}
Notably, $\mathcal{L}$ always has a zero eigenvalue with corresponding eigenvector $\mathbf{1}_l := \begin{bmatrix}
    1 & 1 & \cdots & 1
\end{bmatrix}^\top\in\R^l$, \emph{i.e.},
$\mathbf{1}_l \in \ker \mathcal{L}$.

A directed graph is strongly connected, if there is a directed path from every node to every other node. An undirected graph is connected, if there is an undirected path between every pair of distinct nodes. A graph is called balanced, if $\sum_{j=1}^{l}\gamma_{ij} = \sum_{j=1}^{l}\gamma_{ji}$ for all $i=1,\ldots,l$~\cite{ren2007information}. For an undirected graph, $\mathcal{A}$ is symmetric, and thus every undirected graph is balanced. 

Moreover, it follows from \cite[p. $147$]{merris1994laplacian} that, for an undirected graph, $0$ is a simple eigenvalue of $\mathcal{L}$ if, and only if, the undirected graph is connected. It follows~\cite[Prop. $3$]{fax2004information} that, for a directed graph, $0$ is a simple eigenvalue of $\mathcal{L}$, if the directed graph is strongly connected. The converse of this statement is not true in general. This motivates the following assumption.

\begin{assum}\label{assum1}
The communication graph $\mathcal{G}$ is either 
\begin{itemize}
    \item directed, balanced and strongly connected, or
    \item undirected and connected. 
\end{itemize}
\end{assum}

The next result is an immediate consequence of the above discussion together with the findings of~\cite{ren2007information} and that the Laplacian
matrix associated with any balanced graph is positive semi-definite~\cite{olfati2004consensus}.

\begin{lem}\label{lemma:Laplacian}
Consider a communication graph $\mathcal{G}$ with Laplacian matrix $\mathcal{L}$ such that  Assumption~\ref{assum1} is satisfied. Then $0$ is a simple eigenvalue of the Laplacian matrix $\mathcal{L}$ and the right and left eigenvectors associated with the eigenvalue $0$  are $\mathbf{1}_l$. Furthermore, $\mathcal{L}$ is positive semi-definite.
\end{lem}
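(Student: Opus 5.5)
The plan is to check the three claims of the lemma separately: simplicity of the zero eigenvalue, the form of the left and right null vectors, and positive semi-definiteness. Throughout, "positive semi-definite" is read for a possibly nonsymmetric $\mathcal{L}$ as $v^\top \mathcal{L} v \ge 0$ for all $v\in\R^l$, which is the same as $\mathcal{L}+\mathcal{L}^\top \succeq 0$.

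\emph{Right eigenvector.} By definition of $\mathcal{L}$, every row sums to zero: $\sum_j \alpha_{ij} = \sum_{j\neq i}\gamma_{ij} - \sum_{j\neq i}\gamma_{ij} = 0$. Hence $\mathcal{L}\mathbf{1}_l = 0$ in all cases.

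\emph{Simplicity of the zero eigenvalue.} For an undirected connected graph this is the cited fact from \cite[p. 147]{merris1994laplacian}. For a directed, strongly connected graph it is \cite[Prop. 3]{fax2004information}. In both cases one has geometric multiplicity $1$. For the stronger algebraic statement one can use the Ger{\v{s}}gorin argument of Proposition~\ref{prop:gervsgorin:thm}: all discs are centred at $\alpha_{ii}\ge 0$ with radius $\alpha_{ii}$, so $\sigma(\mathcal{L})\subseteq \overline{\mathbb{C}^+}$. Simplicity of $0$ is then exactly the content of the quoted references.

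\emph{Left eigenvector.} The $j$-th column sum is $\sum_i\alpha_{ij} = \sum_{i\neq j}\gamma_{ji} - \sum_{i\neq j}\gamma_{ij}$, the out-degree minus the in-degree of node $j$. In the undirected case $\mathcal{A}=\mathcal{A}^\top$, so this vanishes trivially. In the balanced case it vanishes by the definition $\sum_j\gamma_{ij}=\sum_j\gamma_{ji}$, after subtracting the equal diagonal terms $\gamma_{ii}$ from both sides. Thus $\mathbf{1}_l^\top\mathcal{L}=0$, and since $0$ is simple, $\mathbf{1}_l$ spans the left null space as well.

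\emph{Positive semi-definiteness.} Set $S := \mathcal{L}+\mathcal{L}^\top$. This matrix is symmetric with off-diagonal entries $-(\gamma_{ij}+\gamma_{ji}) \le 0$. Its row sums are (row sum of $\mathcal{L}$) plus (column sum of $\mathcal{L}$), both zero by the previous two steps. So $S$ is the Laplacian of the undirected weighted graph with weights $\gamma_{ij}+\gamma_{ji}$, and this gives the identity
\[
v^\top \mathcal{L} v = \tfrac12 v^\top S v = \tfrac12\sum_{i<j}(\gamma_{ij}+\gamma_{ji})(v_i-v_j)^2 \ge 0 .
\]
Alternatively, $S$ is symmetric, diagonally dominant, and has nonnegative diagonal, so Proposition~\ref{prop:gervsgorin:thm} gives real eigenvalues in $[0,\infty)$. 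In the undirected case $S=2\mathcal{L}$, so $\mathcal{L}$ itself is symmetric positive semi-definite.

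The only step needing care is this last one: positive semi-definiteness must be interpreted via the symmetric part, and the balanced hypothesis is used precisely to make $S$ have zero row sums. Everything else is bookkeeping with row and column sums together with the cited simplicity results.
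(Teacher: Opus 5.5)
Your proposal is correct and follows the paper's route. You take simplicity of $0$ from the same references (Merris for the undirected case, Fax--Murray for the strongly connected case), and your row/column-sum check for $\mathbf{1}_l$ together with the mirror-graph identity $v^\top\mathcal{L}v=\tfrac12\sum_{i<j}(\gamma_{ij}+\gamma_{ji})(v_i-v_j)^2$ spells out what the paper simply attributes to Ren et al.\ and Olfati-Saber--Murray.

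One small correction: the Ger{\v{s}}gorin localization $\sigma(\mathcal{L})\subseteq\overline{\mathbb{C}^+}$ says nothing about algebraic simplicity. To upgrade geometric to algebraic simplicity yourself in the balanced case, note that a Jordan chain $\mathcal{L}w=\mathbf{1}_l$ would give $l=\mathbf{1}_l^\top\mathbf{1}_l=\mathbf{1}_l^\top\mathcal{L}w=0$, a contradiction.
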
 

\section{Partial state estimation of state-space system}\label{sec:classical}

In this section we propose a novel partial state estimator design for (centralized) LTI state-space systems~\eqref{dls}.
First, we provide a decomposition for the coefficient matrices of system~\eqref{dls}, which will play a crucial role in the establishment of Theorem~\ref{thm:obsvexistence}.

\begin{lem}\label{lm:decomp}
Consider system \eqref{dls}. Then there exists an orthogonal matrix $P\in\R^{n\times n}$ such that
\begin{subequations}\label{kalmanobsvstab}
\begin{eqnarray}
& P^\top AP = \begin{bmatrix} A_{o_1} & 0 & 0 \\ A_{21} & A_{{\bar{o}}_2} & 0 \\ A_{31} & A_{32} & A_{{\bar{o}}_3} \end{bmatrix},
~P^\top B = \begin{bmatrix} B_{o_1} \\ B_{{\bar{o}}_2} \\ B_{{\bar{o}}_3} \end{bmatrix}, & \\
& CP = \begin{bmatrix} C_{o_1} & 0 & 0 \end{bmatrix}, ~ KP = \begin{bmatrix} K_{o_1} & K_{{\bar{o}}_2} & K_{{\bar{o}}_3} \end{bmatrix} ,&
\end{eqnarray}
\end{subequations}
where 
\begin{enumerate}
\item  $A_{o_1} \in \mathbb{R}^{n_1 \times n_1}$, $C_{o_1} \in \mathbb{R}^{p \times n_1}$, and the pair $(A_{o_1},C_{o_1})$ is observable,
\item  $A_{\bar{o}_2} \in \mathbb{R}^{n_2 \times n_2}$ is stable, \emph{i.e.}, $\sigma(A_{\bar{o}_2}) \subseteq \mathbb{C}^-$,
\item $A_{\bar{o}_3} \in \mathbb{R}^{n_3 \times n_3}$ is unstable, \emph{i.e.}, $\sigma(A_{\bar{o}_3}) \subseteq \oC$.
\end{enumerate}
\end{lem}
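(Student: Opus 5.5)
We argue in two stages: first an observability (Kalman) decomposition, then a reordering of the unobservable part by stability. The matrices $B$ and $K$ need no structure; $P^\top B$ and $KP$ are simply partitioned conformally. So it suffices to build an orthogonal $P$ that brings $A$ to the stated block lower triangular form and $C$ to $\begin{bmatrix} C_{o_1} & 0 & 0\end{bmatrix}$.

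\emph{Stage 1 (observability decomposition).} Let $\mathcal{N} := \ker \begin{bmatrix} C^\top & (CA)^\top & \cdots & (CA^{n-1})^\top\end{bmatrix}^\top$ be the unobservable subspace, of dimension $n-n_1$. It is $A$-invariant and contained in $\ker C$. Choose an orthonormal basis of $\mathcal{N}^\perp$ (dimension $n_1$), complete it by an orthonormal basis of $\mathcal{N}$, and let $P_1 = [\,U_1 \;\, U_2\,]$ be the resulting orthogonal matrix, with the columns of $U_2$ spanning $\mathcal{N}$.

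Since $A\mathcal{N}\subseteq\mathcal{N}$, we have $U_1^\top A U_2 = 0$. Since $\mathcal{N}\subseteq\ker C$, we have $CU_2=0$. Hence
\[
P_1^\top A P_1 = \begin{bmatrix} A_{o_1} & 0 \\ \tilde A_{21} & \tilde A_{22}\end{bmatrix}, \qquad CP_1 = \begin{bmatrix} C_{o_1} & 0\end{bmatrix}.
\]
To see that $(A_{o_1},C_{o_1})$ is observable, note that the block structure gives $CA^kP_1 = \begin{bmatrix} C_{o_1}A_{o_1}^k & 0\end{bmatrix}$. The observability matrix of $(A_{o_1},C_{o_1})$ is therefore the observability matrix of $(A,C)$ multiplied by $U_1$. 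Its kernel consists of the $w$ with $U_1w\in\mathcal{N}\cap\mathcal{N}^\perp=\{0\}$, so $w=0$.

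\emph{Stage 2 (stable/unstable splitting of $\tilde A_{22}$).} We need an orthogonal $Q$ with
\[
Q^\top \tilde A_{22} Q = \begin{bmatrix} A_{\bar o_2} & 0 \\ A_{32} & A_{\bar o_3}\end{bmatrix},
\]
where $\sigma(A_{\bar o_2})\subseteq\mathbb{C}^-$ and $\sigma(A_{\bar o_3})\subseteq\oC$. This is the main step, because Proposition~\ref{lm:lower:triangular} only asserts some lower quasi-triangular form, not a prescribed ordering of the diagonal blocks.

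We avoid the ordering issue by working with invariant subspaces. Let $\mathcal{S}$ be the real $\tilde A_{22}$-invariant subspace associated with the eigenvalues in $\oC$, that is, the real part of the sum of the generalized eigenspaces for those eigenvalues; it is real because the spectrum is closed under conjugation. Let $\dim\mathcal{S}=n_3$. Take $Q=[\,V_1\;\,V_2\,]$ orthogonal with the columns of $V_2$ spanning $\mathcal{S}$. Invariance of $\mathcal{S}$ gives $V_1^\top\tilde A_{22}V_2=0$, so $Q^\top\tilde A_{22}Q$ is block lower triangular. The block $V_2^\top\tilde A_{22}V_2$ represents $\tilde A_{22}|_{\mathcal{S}}$, whose spectrum is exactly the eigenvalues in $\oC$. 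The block $V_1^\top\tilde A_{22}V_1$ represents the induced map on the quotient by $\mathcal{S}$, whose spectrum is the remaining eigenvalues, all in $\mathbb{C}^-$. (Alternatively, one could apply Proposition~\ref{lm:lower:triangular} and reorder the diagonal blocks by orthogonal swaps, but that reordering is exactly the nontrivial part, so the subspace argument is cleaner.)

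\emph{Conclusion.} Set $P = P_1\,\blkdiag(I_{n_1},Q)$, which is orthogonal as a product of orthogonal matrices. Then $CP = \begin{bmatrix} C_{o_1} & 0 & 0\end{bmatrix}$, because $CP_1$ vanishes on the last block and right-multiplication by $\blkdiag(I_{n_1},Q)$ preserves this. The matrix $P^\top AP$ has the stated three-block lower triangular form, with $A_{21}, A_{31}$ obtained from $Q^\top\tilde A_{21}$ and $A_{32}$ from Stage 2. Partitioning $P^\top B$ and $KP$ conformally completes the decomposition. Degenerate cases, where $n_1$, $n_2$ or $n_3$ equals zero, are covered by allowing empty blocks.
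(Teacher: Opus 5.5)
Your proof is correct. Stage 1 matches the paper's construction. The paper takes $P_1$ from an SVD of $\mathcal{O}_{[A,C],n}$, whose right singular vectors for the zero singular values form an orthonormal basis of $\ker \mathcal{O}_{[A,C],n}=\mathcal{N}$, so its $P_1$ is one instance of yours. You also supply the justification the paper only asserts: $A$-invariance of $\mathcal{N}$, $\mathcal{N}\subseteq\ker C$, and the kernel computation for $\mathcal{O}_{[A,C],n}U_1$.

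The real difference is Stage 2. The paper splits $A_{\bar o_1}$ by citing Proposition~\ref{lm:lower:triangular}, a lower real Schur form, and tacitly assumes the stable diagonal blocks come first. As you point out, that proposition guarantees no ordering, so the paper is implicitly using the ordered real Schur form, obtained by orthogonally swapping adjacent diagonal blocks.

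You instead place an orthonormal basis of the real invariant subspace of $\tilde A_{22}$ for its eigenvalues in $\oC$ last. You then read off the spectra of the restriction and of the induced quotient map. This is self-contained, gets the ordering for free from which subspace goes last, and needs no reordering lemma, but it is purely existential. The paper's route gives a direct numerical recipe, the ordered Schur decomposition used in Step~1 of Algorithm~\ref{alg1}. That is also how one would compute your subspace $\mathcal{S}$ stably in practice.
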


\begin{pf}
Consider matrix pair $(A,C)$ with observability matrix $$\mathcal{O}_{[A,C],n} := \begin{bmatrix}
C^\top & (CA)^\top & \ldots & (CA^{n-1})^\top \end{bmatrix}^\top.$$ 
Using Singular Value Decomposition (SVD), compute an orthogonal matrix $P_1$ such that 
\begin{equation}\label{eq:svd}
\mathcal{O}_{[A,C],n}P_1 = \begin{bmatrix}
\mathcal{W} & 0 \end{bmatrix}
\end{equation} 
and $\mathcal{W}$ has full column rank. Then  $\mathcal{W}= \mathcal{O}_{[A_{o_1},C_{o_1}],n}$, $P_1^\top AP_1 = \begin{bmatrix} A_{o_1} & 0 \\ A_{2} & A_{\bar{o}_1} \end{bmatrix}$, and $CP_1 = \begin{bmatrix} C_{o_1} & 0 \end{bmatrix}$, where $(A_{o_1},C_{o_1})$ is observable and $A_{\bar{o}_1}$ corresponds to the unobservable modes of $(A,C)$. Now, in view of Proposition \ref{lm:lower:triangular}, compute an orthogonal matrix $P_2$ such that \begin{equation}\label{eq:Schur:decomp}
P_2^\top A_{\bar{o}_1}P_2 = \begin{bmatrix}
A_{\bar{o}_2} & 0 \\ A_{32} & A_{\bar{o}_3}
\end{bmatrix} ,
\end{equation} where $\sigma(A_{\bar{o}_2}) \subseteq \mathbb{C}^- $ and $\sigma(A_{\bar{o}_3}) \subseteq \oC $. Thus, the result is shown with $P = P_1 \begin{bmatrix}
I_{n_1} & 0 \\ 0 & P_2 \end{bmatrix}$.
\end{pf}

\begin{defn}[{\cite{jaiswal2025partial}}]\label{def:kdetectability}
The system \eqref{dls}, or pair $(A,C)$, is said to be partially detectable with respect to $K$, if for all solutions $(x_1, u, y, z_1)$,  $(x_2, u, y, z_2) \in \mathscr{L}^1_{loc}(\mathbb{R};\mathbb{R}^{n+m+p+r})$ of \eqref{dls}, we have
$$ z_1(t) - z_2(t) \to 0 \text{ as } t \to \infty,$$ 
where $\mathscr{L}^1_{loc}(\mathbb{R};\mathbb{R}^{n+m+p+r})$ is the set of locally (Lebesgue) integrable $\mathbb{R}^{n+m+p+r}$-valued functions. Equivalently, for each solution $(x, 0, 0, z) \in \mathscr{L}^1_{loc}(\mathbb{R};\mathbb{R}^{n+m+p+r})$ of \eqref{dls}, we have $z(t) \to 0$ as $t \to \infty$.
\end{defn}

\begin{thm}\label{thm:Kdetectability}
For system \eqref{dls} and a decomposition~\eqref{kalmanobsvstab} (for some fixed matrix~$P$), the following statements are equivalent:
\begin{enumerate}
\item $(A,C)$ is partially detectable with respect to $K$. \label{thm:Kdetect:a}
\item The triple $(A,C,K)$ satisfies \label{thm:Kdetect:c}
\begin{equation}\label{eq:Kdetect:c}
\forall\, \lambda \in \oC:\ \rank \begin{bmatrix}
\mathcal{D}_{[A,C],n,\lambda} \\ K \end{bmatrix} = \rank \mathcal{D}_{[A,C],n,\lambda}. 
\end{equation}
\item $K_{\bar{o}_3}=0$ in \eqref{kalmanobsvstab}. \label{thm:Kdetect:b}
\end{enumerate}
\end{thm}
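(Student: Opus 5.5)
The plan is to work throughout in the coordinates of Lemma~\ref{lm:decomp}: write $x = P\,[x_1^\top\ x_2^\top\ x_3^\top]^\top$ and $A_{\bar o} := \begin{bmatrix} A_{\bar o_2} & 0\\ A_{32} & A_{\bar o_3}\end{bmatrix}$. I will prove \ref{thm:Kdetect:a}$\Leftrightarrow$\ref{thm:Kdetect:b} via the zero-input/zero-output solutions, and \ref{thm:Kdetect:c}$\Leftrightarrow$\ref{thm:Kdetect:b} by computing $\ker \mathcal{D}_{[A,C],n,\lambda}$ explicitly. Condition~\eqref{eq:Kdetect:c} says exactly that $\ker \mathcal{D}_{[A,C],n,\lambda} \subseteq \ker K$, since adding rows can only shrink the kernel.

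\emph{\ref{thm:Kdetect:a}$\Leftrightarrow$\ref{thm:Kdetect:b}.}
\begin{enumerate}
\item Take a solution $(x,0,0,z)$ of \eqref{dls}. With $u=0$, an $\mathscr{L}^1_{loc}$ solution of $\dot x = Ax$ is smooth and of the form $x(t)=e^{At}x(0)$.
\item The output condition gives $C_{o_1}x_1\equiv 0$ with $\dot x_1 = A_{o_1}x_1$. Observability of $(A_{o_1},C_{o_1})$ then forces $x_1\equiv 0$.
\item Hence $\dot x_2 = A_{\bar o_2}x_2$, $\dot x_3 = A_{32}x_2 + A_{\bar o_3}x_3$, and $z = K_{\bar o_2}x_2 + K_{\bar o_3}x_3$. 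Here $x_2(0),x_3(0)$ are arbitrary, because every such initial state produces $y\equiv 0$.
\item If $K_{\bar o_3}=0$, then $z = K_{\bar o_2}e^{A_{\bar o_2}t}x_2(0)\to 0$, since $A_{\bar o_2}$ is Hurwitz.
\item Conversely, assume partial detectability and choose $x_2(0)=0$. Then $K_{\bar o_3}e^{A_{\bar o_3}t}v\to 0$ for every $v$, so the matrix function $K_{\bar o_3}e^{A_{\bar o_3}t}$ tends to zero exponentially.
\item Take Laplace transforms. The rational function $K_{\bar o_3}(sI-A_{\bar o_3})^{-1}$ is then analytic on $\oC$. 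Its poles lie in $\sigma(A_{\bar o_3})\subseteq\oC$, so it has no poles at all. Being strictly proper, it vanishes identically.
\item Expanding $\sum_k K_{\bar o_3}A_{\bar o_3}^k s^{-k-1}$ gives $K_{\bar o_3}=0$. Alternatively, pick an eigenvector $v$ with $K_{\bar o_3}v\neq 0$ in a Jordan-chain argument.
\end{enumerate}

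\emph{\ref{thm:Kdetect:c}$\Leftrightarrow$\ref{thm:Kdetect:b}.} The key step is to show that, for every $\lambda$,
\[
\ker \mathcal{D}_{[A,C],n,\lambda} = \ker \mathcal{O}_{[A,C],n} \cap \ker(\lambda I-A)^n .
\]
\begin{enumerate}
\item Expand $C(\lambda I-A)^k$ binomially. The block rows $C(\lambda I-A)^k$, $k=0,\dots,n-1$, are obtained from the rows $CA^k$ by a block lower-triangular transformation with invertible (signed identity) diagonal blocks. Hence the first $n$ block rows of $\mathcal{D}_{[A,C],n,\lambda}$ have the same row space as $\mathcal{O}_{[A,C],n}$.
\item From \eqref{eq:svd}, $\ker\mathcal{O}_{[A,C],n}$ is $P\{x_1=0\}$.
\item On this subspace, $(\lambda I-A)^n$ acts as $(\lambda I-A_{\bar o})^n$. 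This operator is block lower triangular with diagonal blocks $(\lambda I-A_{\bar o_2})^n$ and $(\lambda I-A_{\bar o_3})^n$.
\item For $\lambda\in\oC$ we have $\lambda\notin\sigma(A_{\bar o_2})$, so the first diagonal block is invertible. Therefore $\ker\mathcal{D}_{[A,C],n,\lambda} = P\{x_1=0,\ x_2=0,\ x_3\in\ker(\lambda I-A_{\bar o_3})^n\}$, taken over $\C$.
\item Condition~\eqref{eq:Kdetect:c} thus reads: $K_{\bar o_3}$ vanishes on every generalized eigenspace of $A_{\bar o_3}$ belonging to $\lambda\in\oC$.
\item All eigenvalues of $A_{\bar o_3}$ lie in $\oC$, so these generalized eigenspaces span $\C^{n_3}$. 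Hence \eqref{eq:Kdetect:c} holds iff $K_{\bar o_3}=0$.
\item For $\lambda\in\oC\setminus\sigma(A_{\bar o_3})$ the kernel is $\{0\}$, so those $\lambda$ impose no condition.
\end{enumerate}

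I expect the main obstacle to be this kernel computation: showing that the block rows $C(\lambda I-A)^k$ reproduce exactly the unobservable subspace, and handling complex $\lambda$ with the real orthogonal $P$. A secondary subtlety is the necessity direction in \ref{thm:Kdetect:a}$\Rightarrow$\ref{thm:Kdetect:b}, where $A_{\bar o_3}$ may have eigenvalues on the imaginary axis. The Laplace-transform (or Jordan-chain) argument above is meant to cover that case, since a purely imaginary mode $e^{i\omega t}v$ with $K_{\bar o_3}v\neq 0$ also fails to tend to zero.
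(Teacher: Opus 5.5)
\textbf{Comparison with the paper.} Your argument for \eqref{eq:Kdetect:c} $\Leftrightarrow$ $K_{\bar o_3}=0$ is the paper's Step~2 written in kernel language.

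\begin{itemize}
\item The paper gets full column rank of $\mathcal{D}_{[A_{o_1},C_{o_1}],n,\lambda}$ from the Hautus test plus an induction. It then removes the $o_1$ and $\bar o_2$ blocks with the rank-additivity result, Proposition~\ref{pre:prop1}.
\item You instead identify the row space of the first $n$ block rows with that of $\mathcal{O}_{[A,C],n}$ via the binomial expansion, and compute $\ker\mathcal{D}_{[A,C],n,\lambda}$ directly.
\end{itemize}

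Both routes end at ``$K_{\bar o_3}$ vanishes on every generalized eigenspace of $A_{\bar o_3}$''. One small repair is needed: the theorem concerns an arbitrary $P$ satisfying \eqref{kalmanobsvstab}, not only the one built from \eqref{eq:svd}. So derive $\ker\mathcal{O}_{[A,C],n}=P\{x_1=0\}$ from $\mathcal{O}_{[A,C],n}P=\begin{bmatrix}\mathcal{O}_{[A_{o_1},C_{o_1}],n} & 0 & 0\end{bmatrix}$ together with observability of $(A_{o_1},C_{o_1})$.

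The genuinely different part is the link to partial detectability. The paper does not prove it; it imports the equivalence of partial detectability with \eqref{eq:Kdetect:c} from \cite{jaiswal2025partial}, a result for descriptor systems. You instead prove partial detectability $\Leftrightarrow$ $K_{\bar o_3}=0$ directly from zero-input, zero-output trajectories. This makes the theorem self-contained, and your sufficiency direction is fine as written.

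\textbf{The gap in the necessity direction.} You write ``$K_{\bar o_3}e^{A_{\bar o_3}t}v\to0$ for every $v$, so $K_{\bar o_3}e^{A_{\bar o_3}t}$ tends to zero exponentially''. This is asserted, not proved, and it carries the whole difficulty, namely the imaginary-axis modes. Once exponential decay is granted, the Laplace transform adds nothing.

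Your fallback also needs more than an eigenvector. For $A_{\bar o_3}=\begin{bmatrix}0&1\\0&0\end{bmatrix}$ and $K_{\bar o_3}=\begin{bmatrix}0&1\end{bmatrix}$, no eigenvector satisfies $K_{\bar o_3}v\neq0$, even though $K_{\bar o_3}\neq0$.

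\textbf{A direct fix.} Splitting into real and imaginary parts, $K_{\bar o_3}e^{A_{\bar o_3}t}v\to0$ holds also for complex $v$. Now take $v\in\ker(\lambda I-A_{\bar o_3})^{n_3}$ with $\lambda\in\sigma(A_{\bar o_3})$. The function
\[
p(t):=e^{-\lambda t}K_{\bar o_3}e^{A_{\bar o_3}t}v=\sum_{k=0}^{n_3-1}\frac{t^k}{k!}\,K_{\bar o_3}(A_{\bar o_3}-\lambda I)^k v
\]
is a vector polynomial. Because $\mathrm{Re}\,\lambda\ge0$, it satisfies $\|p(t)\|\le\|K_{\bar o_3}e^{A_{\bar o_3}t}v\|\to0$ as $t\to\infty$. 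A polynomial tending to zero is identically zero, so $p\equiv0$ and $K_{\bar o_3}v=p(0)=0$. These generalized eigenspaces span $\C^{n_3}$, so $K_{\bar o_3}=0$, with no Laplace transform or exponential-decay claim needed.
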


\begin{pf}
$(\ref{thm:Kdetect:a}) \Leftrightarrow (\ref{thm:Kdetect:c})$ follows from \cite[Sec.~$4.2$]{jaiswal2025partial}.

$(\ref{thm:Kdetect:c}) \Leftrightarrow (\ref{thm:Kdetect:b}):$ We break the proof into two steps. \\
\textbf{Step $1$:} We show by induction on~$k$ that observability of $(A_{o_1},C_{o_1})$ implies
\begin{eqnarray}\label{r4} 
\forall\, \lambda \in \mathbb{C}:\ \rank \mathcal{D}_{[A_{o_1},C_{o_1}], k,\lambda} = n_1
\end{eqnarray}
for all $k=1,\ldots,n$. Eq. \eqref{r4} clearly holds for $k=1$. Now, let $k\le n-1$ and assume that \eqref{r4} holds for all integers $ m \leq k$, i.e., for all $\lambda \in \mathbb{C}$ and all $m \in \{1,2,\ldots,k\}$ we have
\begin{equation}\label{eq:ind:hyp}
\rank \mathcal{D}_{[A_{o_1},C_{o_1}], m,\lambda} = n_1.
\end{equation} 
Observe that
\begin{eqnarray}\label{r5}
\mathcal{D}_{[A_{o_1},C_{o_1}], k+1,\lambda} = \begin{bmatrix}
I_{kp} & 0 \\ 0 & \mathcal{D}_{[A_{o_1},C_{o_1}], 1,\lambda} \end{bmatrix} \mathcal{D}_{[A_{o_1},C_{o_1}], k,\lambda} .
\end{eqnarray}
Now, applying Proposition \ref{pre:prop5} for $X = \begin{bmatrix}
I_{kp} & 0 \\ 0 & \mathcal{D}_{[A_{o_1},C_{o_1}], 1,\lambda} \end{bmatrix}$ and the induction hypothesis \eqref{eq:ind:hyp} on \eqref{r5}, we have
\begin{align*}
\rank \mathcal{D}_{[A_{o_1},C_{o_1}], k+1,\lambda} = \rank \mathcal{D}_{[A_{o_1},C_{o_1}], k,\lambda} = n_1,~~\forall~ \lambda \in \mathbb{C}. 
\end{align*}
Thus, \eqref{r4} holds for all $k=1,\ldots,n$.	\\	
\textbf{Step $2$:} In view of decomposition \eqref{kalmanobsvstab}, condition \eqref{eq:Kdetect:c} becomes
\allowdisplaybreaks
\begin{align}\label{r3}
& \rank \begin{bmatrix}
\mathcal{D}_{[A_{o_1},C_{o_1}],n,\lambda} & 0 & 0 \\
\boxtimes_1 & (\lambda I- A_{{\bar{o}}_2})^n & 0\\ \boxtimes_2 & \boxtimes_3 & (\lambda I- A_{{\bar{o}}_3})^n \\ 
K_{o_1} & K_{{\bar{o}}_2} & K_{{\bar{o}}_3} \end{bmatrix} \nonumber \\&~~   = \rank \begin{bmatrix}
\mathcal{D}_{[A_{o_1},C_{o_1}],n,\lambda} & 0 & 0 \\
\boxtimes_1 & (\lambda I- A_{{\bar{o}}_2})^n & 0 \\ \boxtimes_2 & \boxtimes_3 & (\lambda I- A_{{\bar{o}}_3})^n \end{bmatrix}
\end{align}
where $\boxtimes_1$, $\boxtimes_2$, $\boxtimes_3$, represent some matrices of no interest. Here, matrix $\mathcal{D}_{[ A_{o_1},C_{o_1}],n,\lambda}$ has full column rank by Step~$1$. Also, $\sigma(A_{{\bar{o}}_2)} \subseteq \mathbb{C}^-$ implies that $(\lambda I- A_{{\bar{o}}_2})^n$ is invertible for each $\lambda \in \oC$. Therefore, by application of Proposition~\ref{pre:prop1} for 
$X = \begin{bmatrix}
\mathcal{D}_{[A_{o_1},C_{o_1}],n,\lambda} & 0 \\ \boxtimes_1 & (\lambda I- A_{{\bar{o}}_2})^n  
\end{bmatrix}$ with full column rank, Eq. \eqref{r3} holds if, and only if,
\begin{eqnarray*}
&& \rank \begin{bmatrix} (\lambda I - A_{{\bar{o}}_3})^n \\ K_{{\bar{o}}_3} \end{bmatrix} = \rank  (\lambda I - A_{{\bar{o}}_3})^n ,~~\forall ~ \lambda \in \oC,\\
\text{i.e.}, &&~\bigcup_{\lambda \in \oC} \ker (\lambda I - A_{{\bar{o}}_3})^n  \subseteq \ker K_{{\bar{o}}_3}, \\
\text{i.e.}, && ~~\mathbb{C}^{n_3} \subseteq \ker K_{{\bar{o}}_3} ~~ \left(\text{because } \sigma(A_{{\bar{o}}_3}) \subseteq \oC \right) .
\end{eqnarray*}
Therefore, condition \eqref{eq:Kdetect:c} holds if, and only if, $K_{{\bar{o}}_3} = 0$.
\end{pf}

\begin{lem}\label{lm:part:N}
Assume that $(A,C)$ is partially detectable with respect to $K$. Then, there exist two matrices $T$ and $L$ of appropriate dimensions such that 
\begin{enumerate}
\item $N := (TA - LC)T^\top$ is stable, and 
\item $NT+LC = TA$ and $KT^\top T = K$.
    \end{enumerate}
\end{lem}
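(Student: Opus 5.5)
Using Lemma~\ref{lm:decomp}, fix an orthogonal $P$ giving decomposition~\eqref{kalmanobsvstab}. By Theorem~\ref{thm:Kdetectability}, partial detectability gives $K_{\bar{o}_3}=0$. The idea is to let $T$ select the first two blocks of coordinates, i.e., the observable part and the stable unobservable part:
\[
T := \begin{bmatrix} I_{n_1+n_2} & 0 \end{bmatrix} P^\top \in \R^{(n_1+n_2)\times n}.
\]
Then $TT^\top = I_{n_1+n_2}$, and $T^\top T = P\,\blkdiag(I_{n_1+n_2},0)\,P^\top$. Hence
\[
KT^\top T = \begin{bmatrix} K_{o_1} & K_{\bar{o}_2} & 0\end{bmatrix}P^\top = \begin{bmatrix} K_{o_1} & K_{\bar{o}_2} & K_{\bar{o}_3}\end{bmatrix}P^\top = K,
\]
where the middle equality uses $K_{\bar{o}_3}=0$.

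Next take $L := \begin{bmatrix} L_1 \\ 0\end{bmatrix}$ with $L_1\in\R^{n_1\times p}$, to be chosen. The block lower triangular structure gives
\[
TA = \begin{bmatrix} A_{o_1} & 0 & 0 \\ A_{21} & A_{\bar{o}_2} & 0\end{bmatrix}P^\top,
\qquad
LC = \begin{bmatrix} L_1C_{o_1} & 0 & 0 \\ 0 & 0 & 0\end{bmatrix}P^\top .
\]
The zero third block column of $TA$ is the crucial point: it says the selected coordinates are not driven by the unstable unobservable part. Therefore
\[
TA - LC = \begin{bmatrix} N & 0\end{bmatrix}P^\top,
\qquad
N = \begin{bmatrix} A_{o_1}-L_1C_{o_1} & 0 \\ A_{21} & A_{\bar{o}_2}\end{bmatrix}.
\]
Consequently $(TA-LC)T^\top = N$, which is the matrix in item~1. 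For item~2,
\[
NT = \begin{bmatrix} N & 0\end{bmatrix}P^\top = TA - LC,
\]
so $NT + LC = TA$.

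It remains to make $N$ stable. Since $N$ is block lower triangular, $\sigma(N) = \sigma(A_{o_1}-L_1C_{o_1})\cup\sigma(A_{\bar{o}_2})$. The second set lies in $\mathbb{C}^-$ by Lemma~\ref{lm:decomp}. Because $(A_{o_1},C_{o_1})$ is observable, pole placement yields $L_1$ with $\sigma(A_{o_1}-L_1C_{o_1})\subseteq\mathbb{C}^-$.

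There is no real obstacle; the step needing the most care is checking that the third block column of $TA$ vanishes. That fact is what makes the Sylvester-type identity $NT+LC=TA$ hold exactly, not only up to the range of $T^\top$, and it relies on the block lower triangular structure in~\eqref{kalmanobsvstab}. The degenerate cases $n_1=0$ or $n_2=0$ are handled by deleting the corresponding blocks.
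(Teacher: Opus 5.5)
Your proposal is correct and essentially reproduces the paper's proof. It makes the same choice $T=\begin{bmatrix} I_{n_1+n_2} & 0\end{bmatrix}P^\top$ and $L=\begin{bmatrix} L_1\\ 0\end{bmatrix}$ with $L_1$ stabilizing $A_{o_1}-L_1C_{o_1}$, obtains the same block lower triangular $N$, and uses $K_{\bar{o}_3}=0$ from Theorem~\ref{thm:Kdetectability} in the same way to get $KT^\top T=K$; writing $TA-LC=\begin{bmatrix} N & 0\end{bmatrix}P^\top$ just makes $NT+LC=TA$ a bit more transparent than the paper's direct multiplication.
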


\begin{pf}
It follows from Lemma \ref{lm:decomp} and Theorem \ref{thm:Kdetectability}, invoking that $(A,C)$ is partially detectable with respect to $K$, that there exists an orthogonal matrix $P$ such that 
\begin{eqnarray*}
& P^\top AP = \begin{bmatrix} A_{o_1} & 0 & 0 \\ A_{21} & A_{{\bar{o}}_2} & 0 \\ A_{31} & A_{32} & A_{{\bar{o}}_3} \end{bmatrix},~ CP = \begin{bmatrix} C_{o_1} & 0 & 0 \end{bmatrix}, &  \\
& KP = \begin{bmatrix} K_{o_1} & K_{{\bar{o}}_2} & 0 \end{bmatrix} ,& 
\end{eqnarray*}
Choose a matrix $L_o$ such that $\sigma(A_{o_1} - L_oC_{o_1}) \subseteq \mathbb{C}^-$, which is possible by observability of $(A_{o_1},C_{o_1})$. Set $T := \begin{bmatrix}
I_{n_1+n_2} & 0 \end{bmatrix} P^\top$ and $L := \begin{bmatrix} L_o \\ 0  \end{bmatrix}$. Then,
\allowdisplaybreaks
\begin{align*}
N &= (TA-LC)T^\top \\
&= \begin{bmatrix}
I_{n_1+n_2} & 0 \end{bmatrix} P^\top AP\begin{bmatrix}
I_{n_1+n_2} \\ 0 \end{bmatrix}  - LCP \begin{bmatrix}
I_{n_1+n_2} \\ 0 \end{bmatrix}  \\
&= \begin{bmatrix}
A_{o_1} - L_oC_{o_1} & 0 \\ A_{21} & A_{{\bar{o}}_2}
\end{bmatrix} .    
\end{align*}
Since $\sigma(A_{{\bar{o}}_2}) \subseteq \mathbb{C}^-$ it follows that $N$ is stable. Moreover, 
\begin{align}\label{eq:N:T:A}
    NT+LC &= \begin{bmatrix}
A_{o_1} - L_oC_{o_1} & 0 \\ A_{21} & A_{{\bar{o}}_2}\end{bmatrix} \begin{bmatrix}
I_{n_1+n_2} & 0 \end{bmatrix} P^\top \nonumber \\
&\quad + \begin{bmatrix}
    L_o \\ 0 
\end{bmatrix} \begin{bmatrix} C_{o_1} & 0 & 0 \end{bmatrix}P^\top \nonumber \\
&= \begin{bmatrix}
A_{o_1} & 0 & 0 \\ A_{21} & A_{{\bar{o}}_2} & 0 \end{bmatrix} P^\top \nonumber \\
&= \begin{bmatrix}
I_{n_1+n_2} & 0 \end{bmatrix}
\begin{bmatrix} A_{o_1} & 0 & 0 \\ A_{21} & A_{{\bar{o}}_2} & 0 \\ A_{31} & A_{32} & A_{{\bar{o}}_3} \end{bmatrix} P^\top \nonumber \\
&= \begin{bmatrix}
I_{n_1+n_2} & 0 \end{bmatrix} P^\top A P P^\top = TA,
\end{align}
and
\begin{align*}
 KT^\top T
 &= KP\begin{bmatrix}
     I_{n_1+n_2} \\ 0 
 \end{bmatrix}\begin{bmatrix}
     I_{n_1+n_2} & 0 
 \end{bmatrix}P^\top \\
 &= \begin{bmatrix}
K_{o_1} & K_{{\bar{o}}_2} & 0 \end{bmatrix} \begin{bmatrix}
    I_{n_1+n_2} & 0 \\ 0 & 0 
\end{bmatrix}P^\top \\
&= \begin{bmatrix}
K_{o_1} & K_{{\bar{o}}_2} & 0 \end{bmatrix} P^\top = K. 
\end{align*}
This completes the proof.
\end{pf}

In the remainder of this section, we establish a systematic procedure for the design of matrices $N\in \mathbb{R}^{q\times q}$, $H \in \mathbb{R}^{q \times m}$, $L \in \mathbb{R}^{q \times p}$, and $R\in \mathbb{R}^{r\times q}$ for some $q \in \mathbb{N} \cup \{0\}$ in such a way that the following system becomes a partial state estimator for system \eqref{dls}.
\begin{subequations}\label{obs}
\begin{align}
\dot{w}(t) &= Nw(t) + Hu(t) + Ly(t), \label{obsa} \\
\hat{z}(t) &= Rw(t) , \label{obsb}
\end{align}
\end{subequations}
First, we recall the definition of partial state estimators.

\begin{defn}\label{def:observer}
System \eqref{obs} is said to be a partial state estimator for~\eqref{dls}, if for every solution $(x,u,y,z) \in \mathscr{L}^1_{loc}(\mathbb{R};\mathbb{R}^{n+m+p+r})$ of \eqref{dls} and $\left(w, \begin{bmatrix}
u \\ y \end{bmatrix} ,\hat{z} \right) \in \mathscr{L}^1_{loc}(\mathbb{R};\mathbb{R}^{q+m+p+r})$ of \eqref{obs}, we have 
$$\hat{z}(t) - z(t) \to 0\ \text{ as }\ t\to \infty.$$
\end{defn}

\begin{thm}\label{thm:obsvexistence}
There exists a partial state estimator of the form \eqref{obs} for system \eqref{dls}, if $(A,C)$ is partially detectable with respect to $K$.
\end{thm}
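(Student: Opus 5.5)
We construct the estimator explicitly from the matrices of Lemma~\ref{lm:part:N}. Since $(A,C)$ is partially detectable with respect to $K$, Lemma~\ref{lm:part:N} gives $T$ and $L$ with $N:=(TA-LC)T^\top$ stable, $NT+LC=TA$ and $KT^\top T=K$. We take $q:=n_1+n_2$ and keep this $N$ and $L$ in~\eqref{obs}. The remaining parameters are chosen to cancel the input and feedthrough terms:
\[
H := TB - LD, \qquad R := KT^\top .
\]

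The key step is the error dynamics. For any solution $(x,u,y,z)$ of~\eqref{dls} and any solution $w$ of~\eqref{obs} driven by the same $u$ and $y$, set $e:=w-Tx$. Using $y=Cx+Du$, we obtain, in the distributional (or absolutely continuous) sense,
\[
\dot e = Nw + Hu + LCx + LDu - TAx - TBu .
\]
Substituting $LC = TA - NT$ and $H=TB-LD$ reduces this to $\dot e = N(w - Tx) = Ne$. Hence $e(t)=e^{Nt}e(0)$, and $e(t)\to 0$ because $\sigma(N)\subseteq\mathbb{C}^-$. Then
\[
\hat z - z = KT^\top w - Kx = KT^\top w - KT^\top T x = R e \to 0,
\]
where the identity $KT^\top T=K$ is used in the second step.

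The only delicate point is regularity, since solutions are merely in $\mathscr{L}^1_{loc}$. We interpret solutions of~\eqref{dls} and~\eqref{obs} as the usual Carathéodory solutions, so that $x$ and $w$ are absolutely continuous and satisfy the equations almost everywhere. Then $e$ is absolutely continuous with $\dot e=Ne$ almost everywhere, so $e$ is given by the matrix exponential. This yields the required convergence, including the ess-sup sense fixed in the notation.

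If $q=0$, which happens when $n_1=n_2=0$, we have $K=0$ by Theorem~\ref{thm:Kdetectability}. The estimator is then trivial with $\hat z=0$, and the conclusion holds directly.
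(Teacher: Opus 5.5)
Your proposal is correct and follows the paper's proof essentially verbatim. It makes the same choices $H=TB-LD$ and $R=KT^\top$ on top of Lemma~\ref{lm:part:N}, uses the same error $w-Tx$ with $\dot e=Ne$, and obtains $\hat z-z=R(w-Tx)$ from $KT^\top T=K$. Your remarks on Carathéodory regularity and on the degenerate case $q=0$ (where $K=0$) are harmless additions that the paper leaves implicit.
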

	
\begin{pf} 
Assume that $(A,C)$ is partially detectable with respect to $K$. Then it follows from Lemma~\ref{lm:part:N} that there exist two matrices $T$ and $L$ of appropriate dimensions such that $N := (TA - LC)T^\top$ is stable, $NT+LC = TA$ and $KT^\top T = K$. Set $R := KT^\top$ and $H := TB-LD$.
Now, we claim that the following system is a partial state estimator for system \eqref{dls}:
\allowdisplaybreaks
\begin{align*}
\dot{w}(t) &= N w(t) + Hu(t) + Ly(t),  \\
\hat{z}(t) &= Rw(t).	
\end{align*}
Let $e(t) = \hat{z}(t) - z(t)$,  then
\begin{eqnarray*}
e(t) = Rw(t) - Kx(t) = R(w(t) - Tx(t)) =: Re_1(t) ,
\end{eqnarray*}
 and
\begin{align*}
\dot{e}_1(t) &= N w(t) + Hu(t) + Ly(t) - TAx(t) - TBu(t)\\
&= Ne_1(t) + \underset{ = 0}{\underbrace{(H + LD - TB)}}u(t) + \\
& ~\qquad \underset{ = 0}{\underbrace{(NT + LC - TA)}}x(t), \\
&= Ne_1(t).
\end{align*}
Since $\sigma(N) \subseteq \mathbb{C}^-$ we find that $e_1(t) \to 0$ as $t \to \infty$, consequently, $e(t) \to 0$ as $t \to \infty$. This completes the proof.
\end{pf}

\section{Distributed partial state estimation of state-space system}\label{sec:distributed}

In this section, first, we extend the definition of partial detectability of system \eqref{dls} to joint partial detectability of system \eqref{eqn1}. Later, we prove that under Assumption \ref{assum1}, joint partial detectability is necessary and sufficient for the existence of distributed partial state estimators for system~\eqref{eqn1}. Before proceeding further, we introduce the notation $\tilde{y} := \begin{bmatrix}
 y_1^\top & y_2^\top & \ldots & y_l^\top  
\end{bmatrix}^\top$, $\tilde{C}= \begin{bmatrix}
 C_1^\top & C_2^\top & \ldots & C_l^\top
\end{bmatrix}^\top$, and     $\tilde{p} = \sum_{i = 1}^{l} p_i $.

\begin{defn}\label{def:partial:joint:detectability}
System \eqref{eqn1} is said to be  jointly partially detectable with respect to $K$, if $(A,\tilde{C})$ is partially detectable with respect to~$K$. 
\end{defn}

The following result is a direct consequence of Definition \ref{def:partial:joint:detectability} and Theorem \ref{thm:Kdetectability}.

\begin{lem}\label{lm:joint:part:detect}
System \eqref{eqn1} is jointly partially detectable with respect to $K$ if, and only if, 
$\rank \begin{bmatrix}
\mathcal{D}_{[A,\tilde{C}],n,\lambda} \\ K \end{bmatrix} = \rank \mathcal{D}_{[A,\tilde{C}],n,\lambda} $ for all $\lambda \in \oC. $
\end{lem}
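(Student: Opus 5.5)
The statement follows by instantiating earlier results for the stacked system, so the plan is a chain of equivalences rather than a new argument. By Definition~\ref{def:partial:joint:detectability}, joint partial detectability of \eqref{eqn1} with respect to $K$ means by definition that the pair $(A,\tilde{C})$ is partially detectable with respect to $K$. It therefore suffices to apply Theorem~\ref{thm:Kdetectability} to the centralized system \eqref{dls} with output matrix $C:=\tilde{C}\in\R^{\tilde{p}\times n}$, output $y:=\tilde{y}$, and $D:=\begin{bmatrix} D_1^\top & \cdots & D_l^\top\end{bmatrix}^\top$.

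First I will check that this stacked system is a legitimate instance of \eqref{dls}. The solutions $(x,u,\tilde{y},z)$ of \eqref{eqn1} correspond exactly to the solutions of \eqref{dls} with these matrices. Hence Definition~\ref{def:kdetectability} applied to $(A,\tilde{C})$ describes precisely the joint notion.

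Next I fix an orthogonal matrix $P$ from Lemma~\ref{lm:decomp} applied to $(A,\tilde{C},K)$; Lemma~\ref{lm:decomp} holds for any output matrix, so $P$ exists. This is needed only because Theorem~\ref{thm:Kdetectability} is phrased for a given decomposition. Its equivalence (\ref{thm:Kdetect:a})$\Leftrightarrow$(\ref{thm:Kdetect:c}) does not depend on the particular $P$, so it gives
\[
\forall\,\lambda\in\oC:\ \rank\begin{bmatrix}\mathcal{D}_{[A,\tilde{C}],n,\lambda}\\ K\end{bmatrix}=\rank\mathcal{D}_{[A,\tilde{C}],n,\lambda},
\]
which is exactly the claimed rank condition. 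The only point that needs care is confirming that the notation $\mathcal{D}_{[A,\tilde{C}],n,\lambda}$ is well defined for a stacked $\tilde{C}$, and it is, since the definition allows an arbitrary $C$. Beyond that I expect no real obstacle.
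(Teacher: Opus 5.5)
Your proposal is correct and follows the paper's route. The paper also obtains the lemma directly from Definition~\ref{def:partial:joint:detectability} together with the equivalence (\ref{thm:Kdetect:a})$\Leftrightarrow$(\ref{thm:Kdetect:c}) of Theorem~\ref{thm:Kdetectability}, applied with $C=\tilde{C}$; your extra bookkeeping (stacking the $D_i$, fixing some $P$ from Lemma~\ref{lm:decomp}) is harmless and simply makes explicit what the paper leaves implicit.
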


For a communication graph $\mathcal{G}$ with adjacency matrix $\mathcal{A} = [\gamma_{ij}] \in \R^{l \times l}$, we consider the following local candidate systems for a distributed partial state estimator, where $i \in \{1,2,\ldots,l\}$:
\begin{subequations}\label{eq:part:obsv}
\begin{align}
\dot{w}_i(t) &= N_iw_i(t) + H_iu(t) + L_iy_i(t) \notag \\ 
&\quad + \gamma M_i\sum_{j = 1}^{l} \gamma_{ij}(w_j(t) - w_i(t)), \\
z_i(t) &= R_iw_i(t),
  \end{align}     
\end{subequations}
where $z_i :\mathbb{R} \to \mathbb{R}^r$ is the estimate of the $i$th subsystem for the functional vector $z = Kx$, and, for some $l_i \in \mathbb{N} \cup \{0\}$, we have $N_i \in \R^{l_i \times l_i}$, $H_i \in \R^{l_i \times m}$, $L_i \in \R^{l_i \times p_i}$, $M_i \in \R^{l_i \times l_i}$, $R_i \in \R^{r \times l_i}$   and $\gamma >0$. We set $\tilde{w} := \begin{bmatrix}
 w_1^\top & w_2^\top & \ldots & w_l^\top  
\end{bmatrix}^\top$, $\hat z := \begin{bmatrix}
 z_1^\top & z_2^\top & \ldots & z_l^\top  
\end{bmatrix}^\top$ and $\tilde{l} := \sum_{i=1}^l l_i$. 

We extend the definition of distributed full-state observer that achieve omniscience asymptotically from~\cite{park2017design} to distributed partial state estimators in the following.

\begin{defn}\label{def:full:observer}
A system of the form \eqref{eq:part:obsv} is said to be a distributed partial state estimator for~\eqref{eqn1}  that  achieves omniscience asymptotically, if for every solution $\left(x,u,\tilde{y},z\right) \in \mathscr{L}^1_{loc} \left(\R;\R^{n+m+\tilde{p}+r} \right)$ of~\eqref{eqn1} and $\left(\tilde{w}, \begin{bmatrix}
    u \\ \tilde{y}
\end{bmatrix}, \hat{z}\right) \in \mathscr{L}^1_{loc} \left(\R;\R^{\tilde{l} +m+\tilde{p}+lr} \right) $  of \eqref{eq:part:obsv}, we have
\begin{equation*}
z_i(t) - z(t)\to 0\ \text{ as }\ t\to\infty, \qquad \text{ for all } i \in \{1,2,\ldots,l \},  
\end{equation*}
i.e., the estimate obtained by each node asymptotically converges to the true partial state of the plant.
\end{defn}

\begin{thm}\label{thm:distributed:full}
Consider system \eqref{eqn1} and a communication graph $\mathcal{G}$ which satisfies Assumption \ref{assum1}. Then, there exists a distributed partial state estimator of the form \eqref{eq:part:obsv} for \eqref{eqn1} that achieves omniscience asymptotically if, and only if, system \eqref{eqn1} is jointly partially detectable with respect to $K$.
\end{thm}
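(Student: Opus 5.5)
The proof splits into necessity and sufficiency. Necessity is a short argument via zero solutions. Sufficiency is a constructive design built on the decomposition of Lemma~\ref{lm:decomp} applied to the stacked pair $(A,\tilde C)$, followed by a Lyapunov argument for the networked error.

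\emph{Necessity.} Suppose \eqref{eq:part:obsv} is a distributed partial state estimator achieving omniscience asymptotically. Let $(x,0,0,z)$ be any solution of \eqref{eqn1} with $u\equiv 0$ and $\tilde y\equiv 0$. Then $\tilde w\equiv 0$, $\hat z\equiv 0$ is a solution of \eqref{eq:part:obsv} driven by $u\equiv0$ and $\tilde y\equiv 0$. By Definition~\ref{def:full:observer}, $z(t)=z(t)-z_1(t)\to 0$. By the equivalent formulation in Definition~\ref{def:kdetectability}, $(A,\tilde C)$ is therefore partially detectable with respect to $K$, i.e.\ \eqref{eqn1} is jointly partially detectable (Definition~\ref{def:partial:joint:detectability}).

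\emph{Sufficiency: reduction to a consensus-type error system.} Apply Lemma~\ref{lm:decomp} to $(A,\tilde C)$ to obtain $P$; by Theorem~\ref{thm:Kdetectability}, $K_{\bar o_3}=0$. Since $\tilde C P=[\tilde C_{o_1}\ 0\ 0]$, each block satisfies $C_iP=[C_{i,o_1}\ 0\ 0]$. Set $T:=[I_{n_1+n_2}\ 0]P^\top$ for every node, so $l_i=n_1+n_2$, and
\[
F:=TAT^\top=\begin{bmatrix}A_{o_1}&0\\A_{21}&A_{\bar o_2}\end{bmatrix},\qquad G_i:=C_iT^\top=[C_{i,o_1}\ 0].
\]
As in \eqref{eq:N:T:A}, the block structure gives $TAT^\top T=TA$, $C_iT^\top T=C_i$, and $KT^\top T=K$. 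For gains $L_i$ still to be chosen, set
\[
N_i:=F-L_iG_i,\quad H_i:=TB-L_iD_i,\quad R_i:=KT^\top .
\]
With $e_i:=w_i-Tx$, the same computation as in the proof of Theorem~\ref{thm:obsvexistence} yields
\[
\dot e_i=(F-L_iG_i)e_i+\gamma M_i\sum_j\gamma_{ij}(e_j-e_i),\qquad z_i-z=KT^\top e_i .
\]
Stacking, $\dot e=\big(\blkdiag(F-L_iG_i)-\gamma\,\blkdiag(M_i)(\mathcal L\otimes I)\big)e$, and it suffices to make this matrix Hurwitz.

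\emph{Sufficiency: choice of gains and stability.} The pair $(F,\tilde G)$ with $\tilde G=[G_1^\top\ \cdots\ G_l^\top]^\top$ is detectable, because the $(1,1)$ block $(A_{o_1},\tilde C_{o_1})$ is observable and $A_{\bar o_2}$ is stable. Hence there is $Q=Q^\top>0$ with
\[
F^\top Q+QF-\tfrac{2}{l}\tilde G^\top\tilde G<0 ,
\]
for instance from a Riccati or Lyapunov equation, after scaling $\tilde G$. Choose $L_i:=Q^{-1}G_i^\top$, so that $QL_iG_i=G_i^\top G_i$, and choose $M_i:=Q^{-1}$. Take $V=e^\top(I_l\otimes Q)e$. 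Then
\[
\dot V=e^\top\Big(\blkdiag\big(F^\top Q+QF-2G_i^\top G_i\big)-\gamma\big((\mathcal L+\mathcal L^\top)\otimes I\big)\Big)e .
\]
By Lemma~\ref{lemma:Laplacian}, $\mathcal L+\mathcal L^\top\ge 0$, and its kernel is exactly $\operatorname{span}\{\mathbf 1_l\}$, because the graph is balanced and (strongly) connected. Decompose $e=\mathbf 1_l\otimes\bar e+\tilde e$ with $(\mathbf 1_l^\top\otimes I)\tilde e=0$. On the consensus part the first block-diagonal term contributes $\bar e^\top\big(lF^\top Q+lQF-2\tilde G^\top\tilde G\big)\bar e\le -c\|\bar e\|^2$ for some $c>0$, by the inequality defining $Q$. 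On $\tilde e$ the Laplacian term gives $-\gamma\lambda_2\|\tilde e\|^2$, where $\lambda_2>0$ is the smallest nonzero eigenvalue of $\mathcal L+\mathcal L^\top$. The cross terms are bounded by $\kappa\|\bar e\|\|\tilde e\|$, with $\kappa$ independent of $\gamma$.

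The main obstacle is this last estimate. The plan is to apply a Schur complement (or Young's inequality) to the resulting $2\times2$ quadratic form in $(\|\bar e\|,\|\tilde e\|)$, and to show it is negative definite for all $\gamma$ larger than an explicit threshold of order $\kappa^2/(c\lambda_2)$ plus a term bounding $\|F^\top Q+QF\|$. Once this holds, $\dot V<0$, so $e\to0$ exponentially, and $z_i-z=KT^\top e_i\to0$ for every $i$.
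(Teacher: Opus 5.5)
Your proposal is correct and follows essentially the paper's route: the same $T=[I_{n_1+n_2}\ 0]P^\top$ from Lemma~\ref{lm:decomp}, the Lyapunov function $e^\top M^{-1}e$ with $M_i$ the inverse of a common Lyapunov matrix, the split into consensus and disagreement components, and a Schur-complement bound giving a threshold on $\gamma$. Your gain $L_i=Q^{-1}G_i^\top$ is a special case of the paper's choice $L_i=l\bar L_i$, $M_i=\bar P^{-1}$ (take $\bar L_i=\frac{1}{l}Q^{-1}G_i^\top$ and $\bar P=Q$), and your necessity argument via the zero estimator trajectory is a slightly more direct version of the paper's.
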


\begin{pf} 
($\Rightarrow$): Let system \eqref{eq:part:obsv} be a distributed partial state estimator for system \eqref{eqn1} that achieves omniscience asymptotically.  Therefore, for the solution $(0,0,0,0) \in \mathscr{L}^1_{loc} \left(\R;\R^{n +m+\tilde{p}+r} \right) $ of~\eqref{eqn1} and for an arbitrary solution $\left(\tilde{w},\begin{bmatrix}
    0\\ 0
\end{bmatrix},\hat{z}\right) \in \mathscr{L}^1_{loc} \left(\R;\R^{\tilde{l} +m+\tilde{p}+lr} \right)$ of~\eqref{eq:part:obsv}, we have 
\begin{equation}\label{part:r6}
z_i(t) = z_i(t)-z(t) \to 0 \text{ as } t \to \infty, ~ \forall ~ i \in \{1,2,\ldots,l \}.
\end{equation}
Now, let $(x,0,0,z) \in \mathscr{L}^1_{loc} \left(\R;\R^{n +m+\tilde{p}+r} \right)$ and $\left(\tilde{w},\begin{bmatrix}
    0\\ 0
\end{bmatrix},\hat{z}\right) \in \mathscr{L}^1_{loc} \left(\R;\R^{\tilde{l} +m+\tilde{p}+lr} \right)$ be arbitrary solutions of \eqref{eqn1} and \eqref{eq:part:obsv}, respectively. Then, for each $i \in \{ 1,2,\ldots,l \}$, we have
\begin{eqnarray}\label{part:r7}
e_i(t) := z_i(t)-z(t) \to 0 \text{ as } t \to \infty,
\end{eqnarray}
and combining~\eqref{part:r6} with~\eqref{part:r7} leads to
\begin{eqnarray*}
 z(t) = z_i(t) - e_i(t)\to 0 \text{ as } t \to \infty ,~ \forall ~ i \in \{ 1,2,\ldots,l \}.
\end{eqnarray*}
Hence, it follows from Definition \ref{def:kdetectability} and Definition \ref{def:partial:joint:detectability} that pair $(A,\tilde{C})$ is jointly partially detectable with respect to $K$.

$(\Leftarrow):$ Assume that system \eqref{eqn1} is jointly partially detectable with respect to $K$. Then it follows from Definition \ref{def:partial:joint:detectability} and Lemma~\ref{lm:part:N} that there exist matrices $T$ and $\bar{L} = \begin{bmatrix}
    \bar{L}_1 & \bar{L}_2 & \ldots & \bar{L}_l
\end{bmatrix}$ of appropriate dimensions such that 
$\bar{N}:=  (T A - \sum_{i = 1}^{l}\bar{L}_iC_i) T^\top$ is stable, $\bar{N}T+ \bar{L}\tilde{C} = TA$ and $KT^\top T = K$.  
Now, the Lyapunov stability theory for linear systems implies there exists a symmetric positive definite matrix $\bar{P}$ such that 
\begin{equation*}
\bar{N}^\top \bar{P} + \bar{P} \bar{N} < 0 .
\end{equation*} 
For each $i \in \{ 1,2,\ldots,l\}$, set
$L_i = l\bar{L}_i$ and $N_i = (TA - L_iC_i)T^\top$.
Note that
$\sum_{i = 1}^{l}N_i =   (lTA - \sum_{i = 1}^{l}L_iC_i)T^\top = l(TA-\sum_{i = 1}^{l} \bar{L}_iC_i)T^\top = l\bar{N}$
is stable and 
\[
\left( \sum_{i = 1}^{l}N_i^\top \right) \bar{P} + \bar{P} \left( \sum_{i = 1}^{l}N_i \right)  = l \left( \bar{N}^\top \bar{P} + \bar{P}\bar{N} \right) < 0,
\]
since $l$ is a positive integer. Moreover, by performing the similar steps as for \eqref{eq:N:T:A}, we obtain
\begin{equation}\label{part:R5}
N_i T + L_i C_i = TA, \qquad \text{ for all } i \in \{ 1,2,\ldots,l\}.
\end{equation}
For each $i \in \{ 1, 2, \ldots,l \}$, define
\begin{eqnarray}\label{part:R6}
H_i = TB - L_iD_i, \quad  M_i = \bar{P}^{-1}, \quad  R_i = KT^\top,
\end{eqnarray}
and the system \eqref{eq:part:obsv}
for some positive $\gamma>0$ to be specified later. Let $e_i(t) = z_i(t) - z(t)$,  then
\begin{eqnarray}\label{part:R3}
e_i(t) = R_iw_i(t) - Kx(t) = R_i(w_i(t) - Tx(t)) =: R_i\tilde{e}_i(t) .
\end{eqnarray}
Furthermore, we find that
\begin{align}\label{part:R4}
&\dot{\tilde{e}}_i(t) = N_i\tilde{e}_i(t) +  (N_iT + L_iC_i - TA)x(t)  \nonumber \\
& \quad + (H_i + L_iD_i - TB)u(t) + \gamma M_i\sum_{j = 1}^{l} \gamma_{ij}(\tilde{e}_j(t) - \tilde{e}_i(t)).
\end{align}
Therefore, from \eqref{part:R5}, \eqref{part:R6}, \eqref{part:R3}, and \eqref{part:R4},  the error dynamics of the $i^{th}$-local estimator become 
\begin{align*}
\dot{\tilde{e}}_i (t) &= N_i\tilde{e}_i(t) + \gamma M_i\sum_{j = 1}^{l} \gamma_{ij}(\tilde{e}_j(t) - \tilde{e}_i(t)) , \\
e_i(t) &= R_i\tilde{e}_i(t) . 
\end{align*}
Consequently, by setting $N = \blkdiag\left(N_1,N_2,\ldots,N_l\right)$, $M = \blkdiag\left(M_1,M_2,\ldots,M_l\right)$, $R = \blkdiag\left(R_1,R_2,\ldots,R_l\right)$, $\tilde{e} = \begin{bmatrix}
\tilde{e}_1^\top & \tilde{e}_2^\top & \ldots & \tilde{e}_l^\top      
\end{bmatrix}^\top$ and  $e = \begin{bmatrix}
e_1^\top & e_2^\top & \ldots & e_l^\top
\end{bmatrix}^\top$, 
the global error dynamics become
\begin{align*}
\dot{\tilde{e}} (t) &= \left(N - \gamma M(\mathcal{L} \otimes I_{\tilde{l}} ) \right) \tilde{e}(t) , \\
e(t) &= R\tilde{e}(t). 
\end{align*} 
To complete the proof, it is sufficient to find the value of  $\gamma$ in such a way that $N - \gamma M(\mathcal{L} \otimes I_{\tilde{l}})$ is stable. Utilizing the symmetry and positive definiteness of $M$, we construct a Lyapunov function as
$V(t) = \tilde{e}^\top (t) M^{-1} \tilde{e}(t)$. Then
\begin{align*}
\dot{V}(t) &= \dot{\tilde{e}}^\top (t) M^{-1} \tilde{e}(t) + \tilde{e}^\top (t) M^{-1} \dot{\tilde{e}} (t) \\
&= \tilde{e}^\top (t) \left(N^\top M^{-1} + M^{-1}N - \gamma (\mathcal{L} + \mathcal{L}^\top) \otimes I_{\tilde{l}} \right) \tilde{e}(t).
\end{align*}
To complete the proof, it remains to show that $\dot{V}(t) < 0$ for all $t\ge 0$. By Assumption~\ref{assum1} and Lemma~\ref{lemma:Laplacian}, the Laplacian matrix $\mathcal{L}$ is positive semi-definite and $\mathbf{1}_l \in\ker (\mathcal{L} + \mathcal{L}^\top)$. Consequently, $\mathcal{L} + \mathcal{L}^\top$ is symmetric and positive semi-definite. We may decompose any vector $\tilde{e} = w_1 + w_2$, where $w_1 = \mathbf{1}_l \otimes w \in \ker\left( (\mathcal{L} + \mathcal{L}^\top) \otimes I_{\tilde{l}} \right)$ for some $w\in\R^{\tilde l}$, and $w_2$ is orthogonal to $w_1$. Therefore,
\begin{align*}
 \dot{V}(t) 
 &= w_1^\top(t) \left(N^\top M^{-1} + M^{-1}N \right) w_1(t) \\
 & \quad + w_2^\top(t)  \left(N^\top M^{-1} + M^{-1}N \right) w_2(t)  \\
 & \quad- w_2^\top(t)  \left(\gamma (\mathcal{L} + \mathcal{L}^\top) \otimes I_{\tilde{l}} \right) w_2(t) \\ &~~~ + 2w_2^\top(t)\left(N^\top M^{-1} + M^{-1}N \right) w_1(t) \\
&\leq  l w^\top(t)\left( \bar{N}^\top \bar{P} + \bar{P}\bar{N} \right)w(t) \\
& \quad  + (\Lambda_1 - \gamma \lambda_2 ) w_2^\top(t)w_2(t) \\
& \quad  + 2w_2^\top(t)\left(N^\top M^{-1} + M^{-1}N \right) w_1(t) \\
&\leq \Lambda_2 w_1^\top(t)w_1(t) + (\Lambda_1 - \gamma \lambda_2 ) w_2^\top(t)w_2(t) \\
& \quad + 2w_2^\top(t)\left(N^\top M^{-1} + M^{-1}N \right) w_1(t)\\
&= \begin{bmatrix}
    w_1(t) \\ w_2(t)
\end{bmatrix}^\top \begin{bmatrix}
\Lambda_2 I & \left(N^\top M^{-1} + M^{-1}N\right) \\
* & (\Lambda_1 - \gamma \lambda_2) I\end{bmatrix} \begin{bmatrix}
    w_1(t) \\ w_2(t)
\end{bmatrix}
\end{align*}
where $\Lambda_{1}$ and $\Lambda_{2} (<0)$ are the largest eigenvalues of the matrices $N^\top M^{-1} + M^{-1}N$ and $\bar{N}^\top \bar{P} + \bar{P}\bar{N}$, respectively, and $\lambda_2(>0)$ is the smallest nonzero eigenvalue of  $\mathcal{L} + \mathcal{L}^\top$. Now, it follows from considering the Schur complement that the symmetric matrix $\begin{bmatrix}
-\Lambda_2 I & -\left(N^\top M^{-1} + M^{-1}N\right) \\
* & (\gamma \lambda_2 -  \Lambda_1 ) I\end{bmatrix}$ is positive definite if, and only if, $-\Lambda_2 I > 0$  and
\[
 (\gamma \lambda_2 -  \Lambda_1 ) I + \frac{1}{\Lambda_2} \left(N^\top M^{-1} + M^{-1}N\right)^2 > 0,
\]
for which it is sufficient that $\Lambda_2< 0$  and $(\gamma \lambda_2 -  \Lambda_1 ) + \frac{\Lambda_3}{\Lambda_2}  >0$, where $\Lambda_3$ is the largest eigenvalue of the matrix $\left( N^\top M^{-1} + M^{-1}N \right)^2$. Thus, for every positive scalar $\gamma > \frac{\Lambda_1 - \frac{\Lambda_3}{\Lambda_2}}{\lambda_2}$, we have
\begin{align*}
\begin{bmatrix}
\Lambda_2 I & N^\top M^{-1} + M^{-1}N \\ * & (\Lambda_1 - \gamma \lambda_2 ) I
\end{bmatrix} < 0 \implies \dot{V}(t)<0,\ t\ge 0.
\end{align*}
Hence, it follows from Lyapunov stability theory that $N - \gamma M(\mathcal{L} \otimes I_l)$ is stable. This completes the proof.
\end{pf}
We conclude this section by summarizing the estimator design steps in the following algorithm.

\begin{breakablealgorithm}\caption{Computational steps to construct a distributed partial state estimator \eqref{eq:part:obsv} for system \eqref{eqn1} under Assumption \ref{assum1} and joint partial detectability of system \eqref{eqn1} with respect to $K$}\label{alg1}
\begin{enumerate}
\item Compute an orthogonal matrix $P$ such that the properties in Lemma \ref{lm:decomp} are satisfied; an explicit construction is given in its proof. 

\item Compute $T = \begin{bmatrix}
    I_{n_1+n_2} & 0
\end{bmatrix}P^\top$.
\item Compute a matrix $\bar{L} = \begin{bmatrix}
    \bar{L}_1 & \bar{L}_2 & \ldots & \bar{L}_l
\end{bmatrix}$ such that 
$ \bar{N}:=  (TA - \sum_{i = 1}^{l}\bar{L}_iC_i)T^\top$ is stable. 
		
\item Compute a symmetric and positive definite matrix $\bar{P}$ such that 
$\bar{N}^\top \bar{P} + \bar{P}\bar{N} < 0$.

\item Compute  the largest eigenvalue $\Lambda_{2} (<0)$ of the matrix $\bar{N}^\top \bar{P} + \bar{P}\bar{N}$.

\item For each $i \in \{ 1,2,\ldots,l\}$, set $L_i = l\bar{L}_i$,  $N_i = (TA - L_iC_i)T^\top$, $
H_i = TB - L_iD_i$, $R_i = KT^\top$,  and $M_i = \bar{P}^{-1}$.

\item Define the matrices $L = \blkdiag(L_1, L_2, \ldots, L_l)$, $ N = \blkdiag(N_1, N_2, \ldots, N_l)$, and  \\
$M = \blkdiag(M_1, M_2, \ldots, M_l)$. 

\item Compute  the largest eigenvalue $\Lambda_{1}$ of matrix the $N^\top M^{-1} + M^{-1}N$.

\item Compute  the largest eigenvalue $\Lambda_{3}$ of the matrix $\left( N^\top M^{-1} + M^{-1}N \right)^2$. 

\item Compute the smallest nonzero eigenvalue $\lambda_2(>0)$ of $\mathcal{L} + \mathcal{L}^\top$. 

\item Choose a positive scalar $\gamma > \frac{\Lambda_1 - \frac{\Lambda_3}{\Lambda_2}}{\lambda_2}$. Then $N - \gamma M(\mathcal{L} \otimes I_{\tilde{l}} ) $ is stable and~\eqref{eq:part:obsv} is  a distributed partial state estimator.
\end{enumerate}
\end{breakablealgorithm}

\begin{rem}
    To reduce computational burden, Step 8) of Algorithm~\ref{alg1} can be skipped and the estimate $\Lambda_1\le \sqrt{\Lambda_3}$ (which is not sharp in general) can be used.
\end{rem}

\section{Numerical Illustration}\label{sec:numerical}

In this section, we provide a numerical example to validate the developed theoretical results.

\begin{exmp}\label{exp1}
Consider system \eqref{eqn1} with coefficient matrices
\begin{align*}
&    A = \begin{bmatrix}
-1 & 0 & 0 & 1 & 0 & 0 \\ 0 & 3 & 1 & 1 & 1 & 0 \\ 0 & 0 & 2 & 0 & 1 & 0 \\ 0 & 0 & 0 & 3 & 0 & 0 \\ 0 & 0 & 0 & 0 & -2 & 1 \\ 0 & 0 & 0 & 0 & 3 & -2
    \end{bmatrix}, ~ 
B = \begin{bmatrix}
0 & 0 \\ 0 & 0 \\ 0 & 0 \\ 2 & 1 \\ 1 & 0 \\ 4 & 1
    \end{bmatrix}, \\
& C_1 = \begin{bmatrix}
    1 &0 &0 &0 &0 & 0
\end{bmatrix},~ C_2 = \begin{bmatrix}
     0 & 0 & 1 &0 &0 & 0
\end{bmatrix}, \\
& D_1 = 0 = D_2,~
K = \begin{bmatrix}
    0 & 0 & 1 & 2 & 1 & 0 
\end{bmatrix}.
\end{align*}
The communication graph is undirected simply consists of two nodes connected by one edge. Therefore, Assumption \ref{assum1} is satisfied and the Laplacian matrix is given by $\mathcal{L} =\begin{bmatrix}
1 & -1 \\ -1 & 1
\end{bmatrix}$.
Here, it is easy to compute that $\sigma(A) =: \{ 3,~2,~ -0.2679,~ -3.7321,~ -1\}$. Now, it follows from rank condition~\eqref{eq:Kdetect:c} that  neither $(A,C_1)$ nor $(A,C_2)$ is partially detectable with respect to~$K$, as
$    \rank \begin{bmatrix}
\mathcal{D}_{[A,C_1],n,2} \\ K \end{bmatrix} = 6 \neq 5 =\rank \mathcal{D}_{[A,C_1],n,2}$
 and $\rank \begin{bmatrix}
\mathcal{D}_{[A,C_2],n,3} \\ K \end{bmatrix} = 5 \neq 4 =\rank \mathcal{D}_{[A,C_2],n,3}$.
However, the given system is jointly partially detectable with respect to~$K$ as (cf. Lemma \ref{lm:joint:part:detect})
 \begin{equation*}
 \forall\, \lambda \in \oC: \quad \rank \begin{bmatrix}
\mathcal{D}_{[A,\tilde{C}],n,\lambda} \\ K \end{bmatrix} = \rank \mathcal{D}_{[A,\tilde{C}],n,\lambda}. 
\end{equation*}
Hence, it follows from Theorem \ref{thm:distributed:full} that there exists a distributed partial state estimator of
the form \eqref{eq:part:obsv} that achieves omniscience asymptotically. We now design the same for the given system by utilizing the steps provided in Algorithm \ref{alg1}.
By following Steps~$1$ and~$2$ of Algorithm~\ref{alg1}, we obtain
\begin{align*} 
T = \begin{bmatrix}
0 & 0 & 0.3534 & 0 & 0.8463  & -0.3986 \\-0.0113 & 0 & 0 & 0.9999 & 0 & 0 \\ 0 & 0 & 0.9104 & 0 & -0.2132 & 0.3545 \\ 0.9999 & 0 & 0 & 0.0113 & 0 & 0 \\0 & 0 & 0.2150 & 0 & -0.4882 & -0.8458
\end{bmatrix}.
\end{align*}
Now, by leveraging detectability of $(TAT^\top, \tilde{C}T^\top)$, we obtain matrices
$\bar{L}_1 = \begin{bmatrix}
   0  \\ 43.6992  \\  0 \\  7.4048 \\ 0  
\end{bmatrix}$ and $\bar{L}_2 = \begin{bmatrix}
1.4284 \\ 0 \\ 2.5993 \\  0 \\ 0.2939
\end{bmatrix}$ such that matrix
\begin{align*}
   \bar{N} &= (TA-\bar{L}_1C_1-\bar{L}_2C_2)T^\top \\
   & = \begin{bmatrix}
 -3.0556  &  0   & 0.4662 &  0  & -0.3077 \\ 0 &   3.4808  &  0 & -43.6515  &  0 \\ 2.1238  & 0 &  -1.5474  &  0 &  -0.5592 \\ 0  &  1.1284  &  0 &  -8.3926  &  0 \\ -1.5709  & 0  &  0.8376   & 0  & -0.3315
\end{bmatrix} 
\end{align*} is stable. Further, in view of the Lyapunov stability theory for linear systems, we obtain the  matrix 
$$\bar{P} = \begin{bmatrix}
0.4615  & 0&    0.1435 &  0 &  -0.3568 \\0  &  0.0482   & 0 &  -0.1945  &  0 \\0.1435   & 0   & 0.5391 &  0  &  0.2515 \\0  & -0.1945 &  0  &  1.0710 &   0 \\
-0.3568  &  0  &  0.2515 &   0 &   1.0366
\end{bmatrix}$$ such that $\bar{N}^\top \bar{P} + \bar{P}\bar{N} < 0$ with $\sigma(\bar{N}^\top \bar{P} + \bar{P}\bar{N}) = \{ -1.1605,~-1.1041,~-1.0038,~ -0.6874,~ -0.0994\}$ and $\Lambda_2 = -0.0994$. Now, by following the remaining steps of Algorithm \ref{alg1}, we obtain the distributed partial state estimators as follows: 
\begin{align*}
\dot{w}_1(t) &= N_1 w_1(t) + H_1u(t) + L_1y_1(t) + \gamma M_1 (w_2(t) - w_1(t)), \\ 
z_1(t) &= R_1w_1(t),	
\end{align*}
and 
\begin{align*}
\dot{w}_2(t) &= N_2 w_2(t) + H_2u(t) + L_2y_2(t) + \gamma M_2 (w_1(t) - w_2(t)), \\ 
z_2(t) &= R_2w_2(t),	
\end{align*}
with $\gamma = 66$ and the coefficient matrices 
\begin{align*}
& N_1 = \begin{bmatrix}
-2.5508 &   0 &   1.7666  & 0 &  -0.0006 \\ 0 & 3.9733 & 0 & -87.3479 & 0 \\ 3.0424 &  0  &  0.8191&    0 &  -0.0003 \\ 0  &  1.2119 & 0 & -15.7969 & 0\\
-1.4670 & 0 & 1.1051 & 0 & -0.2683
\end{bmatrix},\\
& N_2 = \begin{bmatrix}
-3.5603  & 0 & -0.8343 & 0 & -0.6148 \\ 0 & 2.9882 & 0 & 0.0450 & 0 \\ 1.2052  & 0 & -3.9139 & 0 & -1.1180 \\ 0 & 1.0450  & 0 & -0.9882 & 0 \\ -1.6747 & 0 & 0.5700 & 0 & -0.3947
\end{bmatrix},\\
& L_1 = \begin{bmatrix}
0 \\ 87.3984 \\ 0 \\14.8096 \\ 0
\end{bmatrix} , ~L_2 = \begin{bmatrix}
2.8567 \\ 0 \\ 5.1987 \\ 0 \\ 0.5877
\end{bmatrix},\\
& R_1 = \begin{bmatrix}
1.1997 & 1.9999 & 0.6972 &   0.0225 & -0.2732
\end{bmatrix} = R_2,  \\
& M_1 = \begin{bmatrix}
4.3866   & 0 &  -2.1105  &  0  &  2.0220 \\ 0 & 77.8061 & 0 & 14.1274 & 0 \\ -2.1105 & 0 & 3.1069 & 0 &  -1.4802 \\ 0 & 14.1274  & 0  &  3.4988 &  0 \\ 2.0220  & 0  & -1.4802 &  0 &   2.0198  
\end{bmatrix} = M_2,\\
& H_1 = 
 \begin{bmatrix}
-0.7482 &  1.9999 & 1.2047 & 0.0225 & -3.8716 \\ -0.3986  & 0.9999 & 0.3545  & 0.0113  & -0.8458
\end{bmatrix}^\top = H_2 .
\end{align*}

\noindent Taking the initial conditions of system \eqref{eqn1} as $x(0) = \begin{bmatrix}
1 & 2 & 1 & 2 & 3 \end{bmatrix}^\top$ and of the distributed partial state estimator~\eqref{eq:part:obsv} as $w_1(0) = \begin{bmatrix}
2 & 4 & 4 & 6 & 5  \end{bmatrix}^\top$, $w_2(0) = \begin{bmatrix}
4 & 4 & 6 & 8 & 4 \end{bmatrix}^\top$ with $u(t) = \begin{bmatrix}
    \exp(t) & \sin(t) 
\end{bmatrix}^\top$, the difference between the true and estimated functional vector trajectories are shown in Figure~\ref{fig:fig1}.
The simulation work and all the matrix computations have been executed in MATLAB R2024b environment with relative error tolerance $10^{-8}$. From the error plots (Figure \ref{fig:fig1}), it is clear that the designed distributed partial state estimator works well for the given system.
\begin{figure}
\centering
\subfigure[Partial state estimation error $e_1(t) = z_1(t) - z(t)$.]{\includegraphics[width=.9\linewidth]{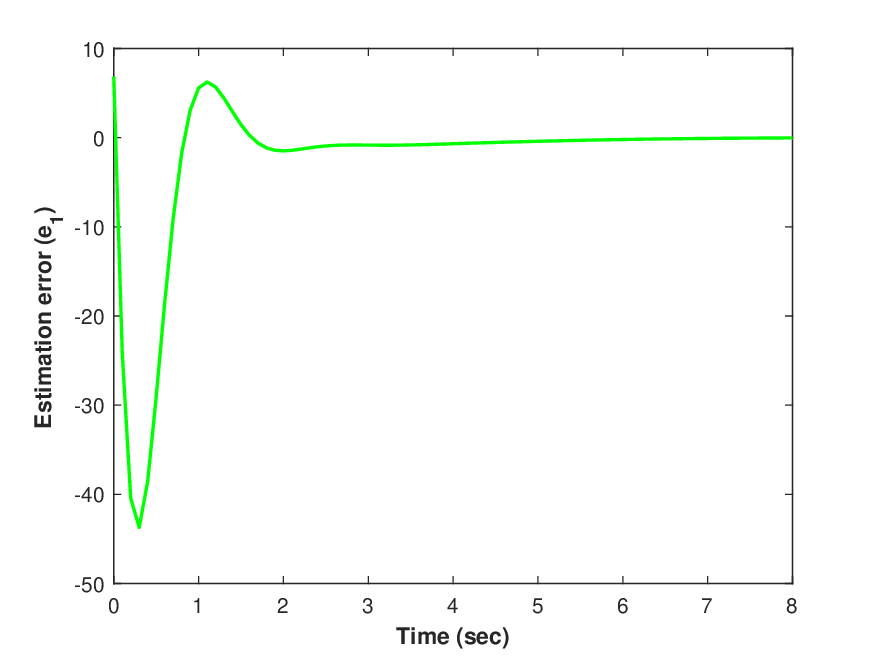}}
\subfigure[Partial state estimation error $e_2(t) = z_2(t) - z(t)$.]{\includegraphics[width=.9\linewidth]{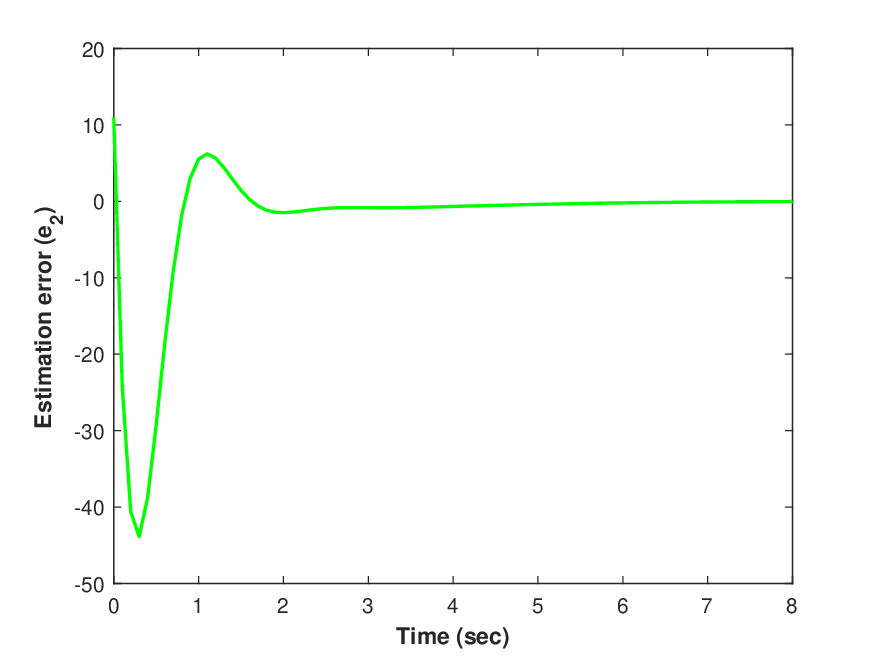}}
\caption{Time response of estimation error for the local observers}
\label{fig:fig1}
\end{figure}
\end{exmp}

\section{Conclusion}\label{sec:conclusion}
This study advances the partial state estimation theory of LTI distributed state-space systems, in which system outputs were measured via a network of sensors spread across multiple nodes. In the considered framework, local measurements at each node are insufficient for effective partial state estimation. To overcome this challenge, we have utilized the distributed state estimator framework that consists of several local estimators, each co-located with one of the given nodes and interconnected through a communication network. We  established  necessary and sufficient conditions for the existence of a distributed partial state estimator that achieves omniscience asymptotically, provided the communication graph is either directed, balanced and strongly
connected or undirected and connected. Additionally,  we proposed a novel partial state estimator design for (centralized) LTI state-space system.

\vspace{-5mm}
\bibliographystyle{plain}

\bibliography{distributed}

@book{varga2011gervsgorin,
  title={Ger{\v{s}}gorin and his circles},
  author={Varga, Richard S},
  volume={36},
  year={2011},
  publisher={Springer Science \& Business Media}
}

@inproceedings{mitra2016approach,
  title={An approach for distributed state estimation of {LTI} systems},
  author={Mitra, Aritra and Sundaram, Shreyas},
  booktitle={2016 54th Annual Allerton Conference on Communication, Control, and Computing (Allerton)},
  pages={1088--1093},
  year={2016},
  organization={IEEE}
}

@article{han2018simple,
  title={A simple approach to distributed observer design for linear systems},
  author={Han, Weixin and Trentelman, Harry L and Wang, Zhenhua and Shen, Yi},
  journal={IEEE Transactions on Automatic Control},
  volume={64},
  number={1},
  pages={329--336},
  year={2018},
  publisher={IEEE}
}

@article{park2017design,
  title={Design of distributed {LTI} observers for state omniscience},
  author={Park, Shinkyu and Martins, Nuno C},
  journal={IEEE Transactions on Automatic Control},
  volume={62},
  number={2},
  pages={561--576},
  year={2017},
  publisher={IEEE}
}

@article{ugrinovskii2013conditions,
  title={Conditions for detectability in distributed consensus-based observer networks},
  author={Ugrinovskii, Valery},
  journal={IEEE Transactions on Automatic Control},
  volume={58},
  number={10},
  pages={2659--2664},
  year={2013},
  publisher={IEEE}
}

@inproceedings{park2012augmented,
  title={An augmented observer for the distributed estimation problem for {LTI} systems},
  author={Park, Shinkyu and Martins, Nuno C},
  booktitle={2012 American Control Conference (ACC)},
  pages={6775--6780},
  year={2012},
  organization={IEEE}
}

@inproceedings{park2012necessary,
  title={Necessary and sufficient conditions for the stabilizability of a class of {LTI} distributed observers},
  author={Park, Shinkyu and Martins, Nuno C},
  booktitle={2012 IEEE 51st IEEE Conference on Decision and Control (CDC)},
  pages={7431--7436},
  year={2012},
  organization={IEEE}
}

@article{yang2022state,
  title={State estimation using a network of distributed observers with unknown inputs},
  author={Yang, Guitao and Barboni, Angelo and Rezaee, Hamed and Parisini, Thomas},
  journal={Automatica},
  volume={146},
  pages={110631},
  year={2022},
  publisher={Elsevier}
}

@article{doostmohammadian2013genericity,
  title={On the genericity properties in distributed estimation: Topology design and sensor placement},
  author={Doostmohammadian, Mohammadreza and Khan, Usman A},
  journal={IEEE Journal of Selected Topics in Signal Processing},
  volume={7},
  number={2},
  pages={195--204},
  year={2013},
  publisher={IEEE}
}

@book{piziak2007matrix,
  title={Matrix theory: from generalized inverses to Jordan form},
  author={Piziak, Robert and Odell, Patrick L},
  year={2007},
  publisher={Chapman and Hall/CRC}
}

@book{hardy2019matrix,
  title={Matrix calculus, Kronecker product and tensor product: a practical approach to linear algebra, multilinear algebra and tensor calculus with software implementations},
  author={Hardy, Yorick and Steeb, Willi-Hans},
  year={2019},
  publisher={World Scientific}
}

@inproceedings{kim2016distributed,
  title={Distributed {L}uenberger observer design},
  author={Kim, Taekyoo and Shim, Hyungbo and Cho, Dongil Dan},
  booktitle={2016 IEEE 55th Conference on Decision and Control (CDC)},
  pages={6928--6933},
  year={2016},
  organization={IEEE}
}

@inproceedings{zhu2014cooperative,
  title={On the cooperative observability of a continuous-time linear system on an undirected network},
  author={Zhu, Henghui and Liu, Kexin and L{\"u}, Jinhu and Lin, Zongli and Chen, Yao},
  booktitle={2014 International Joint Conference on Neural Networks (IJCNN)},
  pages={2940--2944},
  year={2014},
  organization={IEEE}
}

@article{cao2023distributed,
  title={Distributed unknown input observer},
  author={Cao, Ganghui and Wang, Jinzhi},
  journal={IEEE Transactions on Automatic Control},
  volume={68},
  number={12},
  pages={8244--8251},
  year={2023},
  publisher={IEEE}
}

@article{liang2024distributed,
  title={Distributed state and fault estimation for cyber-physical systems under DoS attacks},
  author={Liang, Limei and Su, Rong and Xu, Haotian},
  journal={IEEE/CAA Journal of Automatica Sinica},
  year={2024},
  publisher={IEEE}
}

@article{zhou2025adaptive,
  title={Adaptive distributed unknown input observer for linear systems},
  author={Zhou, Dan-Dan and Zhao, Ran},
  journal={Applied Mathematics and Computation},
  volume={486},
  pages={129027},
  year={2025},
  publisher={Elsevier}
}

@article{jaiswal2025partial,
  title={Partial detectability and generalized functional observer design for linear descriptor systems},
  author={Jaiswal, Juhi and Berger, Thomas and Tomar, Nutan Kumar},
  journal={Franklin Open},
  volume={10},
  pages={100238},
  year={2025},
  publisher={Elsevier}
}

@article{luenberger1964observing,
  title={Observing the state of a linear system},
  author={Luenberger, David G},
  journal={IEEE transactions on military electronics},
  volume={8},
  number={2},
  pages={74--80},
  year={1964},
  publisher={IEEE}
}

@article{darouach2025functional,
  title={On functional observability and functional observer design},
  author={Darouach, Mohamed and Fernando, Tyrone},
  journal={Automatica},
  volume={173},
  pages={112115},
  year={2025},
  publisher={Elsevier}
}

@article{merris1994laplacian,
  title={Laplacian matrices of graphs: a survey},
  author={Merris, Russell},
  journal={Linear algebra and its applications},
  volume={197},
  pages={143--176},
  year={1994},
  publisher={Elsevier}
}

@article{fax2004information,
  title={Information flow and cooperative control of vehicle formations},
  author={Fax, J Alexander and Murray, Richard M},
  journal={IEEE transactions on automatic control},
  volume={49},
  number={9},
  pages={1465--1476},
  year={2004},
  publisher={IEEE}
}

@article{ren2007information,
  title={Information consensus in multivehicle cooperative control},
  author={Ren, Wei and Beard, Randal W and Atkins, Ella M},
  journal={IEEE Control systems magazine},
  volume={27},
  number={2},
  pages={71--82},
  year={2007},
  publisher={IEEE}
}

@article{olfati2004consensus,
  title={Consensus problems in networks of agents with switching topology and time-delays},
  author={Olfati-Saber, Reza and Murray, Richard M},
  journal={IEEE Transactions on automatic control},
  volume={49},
  number={9},
  pages={1520--1533},
  year={2004},
  publisher={IEEE}
}

@article{matsaglia1974equalities,
  title={Equalities and inequalities for ranks of matrices},
  author={Matsaglia, George and PH Styan, George},
  journal={Linear and multilinear Algebra},
  volume={2},
  number={3},
  pages={269--292},
  year={1974},
  publisher={Taylor \& Francis}
}

\end{document}